\pdfoutput=1
\documentclass[11pt,reqno]{amsart}
\usepackage[paperheight=279mm,paperwidth=18cm,textheight=26cm,textwidth=14cm,includehead]{geometry}
\usepackage[T1]{fontenc}
\usepackage[utf8]{inputenc}
\usepackage[unicode]{hyperref}
\hypersetup{
bookmarks=true,
colorlinks=true,
citecolor=[rgb]{0,0,0.5},
linkcolor=[rgb]{0,0,0.5},
urlcolor=[rgb]{0,0,0.75},
pdfpagemode=UseNone,
pdfstartview=FitH,
pdfdisplaydoctitle=true,
pdftitle={Bi-parameter Carleson embeddings with product weights},
pdfauthor={Arcozzi, Mozolyako, Psaromiligkos, Volberg, Zorin-Kranich},
pdflang=en-US
}

\usepackage[style=alphabetic,maxalphanames=4]{biblatex}
\addbibresource{biparameter-embedding.bib}

\usepackage{amssymb}
\usepackage{amsmath}
\usepackage{amsthm}
\usepackage{amsfonts}
\usepackage{mathtools}

\usepackage{tikz}
\usetikzlibrary{patterns}


\DeclarePairedDelimiterX\Set[1]\{\}{%

#1
}

\DeclarePairedDelimiterX\innerp[2]{\langle}{\rangle}{#1,#2}


\numberwithin{equation}{section}
\theoremstyle{plain}
\newtheorem{theorem}[equation]{Theorem}

\newtheorem{lemma}[equation]{Lemma}
\newtheorem{corollary}[equation]{Corollary}
\newtheorem{conjecture}[equation]{Conjecture}

\theoremstyle{remark}

\theoremstyle{definition}



\newcommand{\const}{{\rm const}}

\newcommand{\pd}{\partial}
\def\dist{\operatorname{dist}}


\newcommand{\al}{\alpha}
\newcommand{\eps}{\varepsilon}
\newcommand{\la}{\lambda}

\newcommand{\om}{\omega}
\newcommand{\Om}{\Omega}


\newcommand{\cU}{\mathcal U}
\newcommand{\cV}{\mathcal V}
\newcommand{\cH}{\mathcal H}

\newcommand{\bC}{\mathbb C}
\newcommand{\bD}{\mathbb D}

\newcommand{\bR}{\mathbb R}

\newcommand{\bZ}{\mathbb Z}

\begin{document}

\title[One phase problem]%
{One phase problem for two positive harmonic function: below the codimension $1$ threshold}
\author[A.~Volberg]{Alexander Volberg}
\thanks{AV is partially supported by the NSF grants  DMS-1900268 and DMS-2154402}
\address[A.~Volberg]{Department of Mathematics, Michigan Sate University,  and Hausdorff Center, Universit\"at Bonn}
\email{volberg@math.msu.edu}
\subjclass[2010]{42B37; 31B25; 35J25; 35J70}
%
%
\begin{abstract} What can  be said  about the domain $\Om$ in $\bR^n$ for which its Green's function $G(z)$  satisfies $G(z)\asymp \dist (z, \pd\Om)^\delta$? What can we say about $\Om$ if the Boundary Harnack Principle holds in the form $u/v=\text{real analytic}$ on the part $E$ of its boundary? Here $u, v$ are positive harmonic functions on $\Om$ vanishing on $E$. Is this part of the boundary also nice?
 We discuss these questions below and give answers in very special cases.
\end{abstract}
\maketitle

\section{Preliminaries and notations}
\label{prelim}
In recent years several outstanding problems at the edge of harmonic analysis and geometric measure theory in $\bR^n$ were solved, see e.g. \cite{ENV}, \cite{HMM}, \cite{NTV}, \cite{DM}, \cite{DFM} and the literature therein. However, these works revealed a new threshold: the technique works if the codimension of a (very irregular) set in question  has codimension at most $1$ in $\bR^n$. 

Here we present several questions (some of them  looking very simple) and a tiny bit of answers  breaking down this threshold. We consider
only $n=2$ case, but most of the problems (and some solutions) presented below work for all $n$.

Two phase problem for positive harmonic function can be vaguely described as follows: there are two disjoint domains $\cU, \cV$ in $\bR^n$ with a common part of their boundaries, $E$ called interface. Let $O$ be an open neighborhood of $E$.There are two positive harmonic functions: $u$ in $O\cap \cU$ and $v$ in $O\cap \cV$. Both functions vanish on $E$ (in some sense), but there ratio is well defined on $E$ (again in some sense) and this ratio is a nice function on $E$. One then concludes that  then $E$ is nice (in some sense).  The example of this kind of problem can be found in \cite{KT1}, \cite{AMTV}, where $u=G_\cU(\cdot, p), v=G_\cV(\cdot, q)$, $p\in \cU, q\in \cV$, two Green functions of corresponding domains and the assumption on the ratio of $u$ and $v$ is formulated in terms of
harmonic measures $\om_\cU(\cdot, p), \om_\cV(\cdot, q)$ of two disjoint domains being mutually absolutely continuous (and non-zero) on the interface $E$. 

On the other hand, a one phase problem for positive harmonic function defined near the boundary and  vanishing at the boundary (or for harmonic measure) usually compares the harmonic measure of one given domain with the Hausdorff measure on the boundary. The literature here is rather vast, see, for example, \cite{AHM3TV}, \cite{KT2}, \cite{KT3}, \cite{KPT} and citation therein.

\medskip

We wish to consider a one phase problem in domain $\Om$, with two positive harmonic functions $u, v$ in $O\cap \pd\Om$, where $O$ is just an open disc (ball), $u=0 =v$ on  $E:=O\cap \pd\Om$.  The main assumption is that the ratio  $\frac{u}{v}$ makes a rather nice function on $E$. We would wish to prove that then $E$ is nice. One can think about these two harmonic functions as two Green's functions of the domain $\Om$ but with different poles. Then we are considering the comparison of harmonic measures related to two different poles, rather than comparison of harmonic measure and Hausdorff measure on the boundary.

\smallskip

Now we need to explain what ``nice'' means and what kind of domains $\Om\subset \bR^n$ we consider.
Let us remind that if $\Om$ is Lipschitz domain then $\log\frac{u}{v}$ is H\"older continuous on $E=B(x, r)\cap \Om, x\in \pd\Om,$ by Jerison--Kenig theorem \cite{JK}. Moreover, by their result the same is true for NTA domains, see the definition in \cite{JK}.

We will deal mostly (but not exclusively) with plane domains, $n=2$, but our $\Om$ can very well be infinitely connected, with rather complicated (but compact) boundary. However, we will require several things from $\Om$ to make it ``one sided NTA domain''.

So, we require that $\pd\Om$ has 1) capacity density condition (CDC): $\text{cap}(B(z, \delta))\asymp \text{cap} (B(z, \delta)\cap \pd\Om)$ for all small $\delta$ and all $z\in \pd\Om$; 2) corkscrew condition, see \cite{JK}; 3) Harnack chain condition, see \cite{JK}.

The NTA domains of course satisfy 1), 2), 3). But another example that is important in what follows is the domain $\Om$ whose boundary $\pd\Om$ is a sufficiently nice Cantor set, for example a self-similar set in the sense of conformal dynamical systems.

\medskip

Let us explain briefly what is conformal self-similar sets (conformal repellers). We consider more general sets below, but to have in mind those special examples is  a good idea. So, Let $O$ be a topological discs and $O_1,\dots, O_d$, $d\ge 2$, be topological discs with disjoint closures inside $O$, we require also that $\pd O$ were disjoint with closures of all $O_j$. Let $f=(f_1, \dots, f_d)$ be a collection of conformal maps of $O_i$ onto $O$. By considering $f_i^{-1}(O_j)$, $i, j=1,\dots, d$ we see $d^2$ topological discs of the second generation ($d$ groups of $d$ discs each) that are mapped onto $O$ conformally by iterative power $f^2= f\circ f$ (think about $f$ as a piecewise conformal map). By iterating this construction we have $d^3$ discs of the third generation, et cetera. Finally we can consider the unions of discs of generation $k$, call ot $U_k$ and $J=\cap_k U_k$. Domain $\Om=\bC\setminus J$ is a typical example of domains we consider here, and $O$ itself is atypical example of a neighborhood of $J=\pd \Om$.

\medskip

It is easy to prove that such one sided NTA domains satisfy Jerison--Kenig boundary Harnack principle (BHP), meaning that for any two harmonic and positive $u, v$ in $\Om\cap B(z, r)$ ($z\in \pd\Om$, $r$ being small) that vanish on $ B(z, r)\cap \pd\Om$, one has the H\"older property for $u/v$ on $ B(z, r)\cap \pd\Om$ in the following sense:
\begin{equation}
\label{BHP}
\exists \eps\in (0,1):\forall z_1, z_2 \in \Om \cap B(z, r)\quad \Big|\log\frac{u(z_1)}{v(z_1)} - \log\frac{u(z_2)}{v(z_2)}\Big|\le C|z_1-z_2|^\eps\,.
\end{equation}
Tending $z_i$ to $\zeta_i\in   B(z, r)\cap\pd\Om$, $i=1,2$ we get the H\"older property of $u/v$ on $B(z, r)\cap\pd\Om$ (which is called BHP).
By the way, we use complex notations because below we deal mostly with planar $\Om\subset \bR^2$, but all our questions and conjectures make sense for $n>2$ as well.

\bigskip

Now let us explain what we mean by saying that $\frac{u}{v}$ is ``nice" on $B(z, r)\cap\pd\Om$. The above \eqref{BHP} always happen on one sided NTA, so it is normal and not sufficiently nice. By ``nice" we understand the case a much stronger property:
when  in one sided  NTA domain described by 1), 2), 3) above, on the top of \eqref{BHP}, we have also
\begin{equation}
\label{nicera}
\exists\, \text{real analytic}\,\, R\,\, \text{on}\,\, B(z, r), z\in \pd\Om: \forall \zeta\in B(z, r)\cap\pd\Om,\quad \frac{u(\zeta)}{v(\zeta)} = R(\zeta)\,.
\end{equation}

\bigskip

We now list several conjectures in the order of decreasing difficulty.

\begin{conjecture}
\label{conj1}
Let $\Om$ be our one sided NTA, $ u, v$ be as above, in particular, let \eqref{nicera} holds. Then either $u=\la v$ for some constant $\la$ or $B(z, r)\cap \pd\Om$ is real analytic maybe with the exception of a set of dimension $n-2$.
\end{conjecture}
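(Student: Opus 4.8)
\medskip
\noindent\textbf{Proof proposal for Conjecture~\ref{conj1}.}
If $R$ is constant the first alternative $u=\la v$ is the expected conclusion, and we indicate its mechanism at the end; so assume from now on that $\nabla R\not\equiv 0$ on $B(z,r)$, and fix $z_0\in E$. The plan is to (a)~blow up $\Om,u,v$ at $z_0$ and classify the blow-ups using \eqref{nicera}; (b)~deduce that $\pd\Om$ is flat, hence $C^{1,\al}$, near every point of $E$ outside a small set $\Sigma$; (c)~bootstrap $C^{1,\al}$ to real-analytic; (d)~bound $\dim\Sigma\le n-2$ by Federer dimension reduction. The one elementary input from \eqref{BHP}: writing $h=u/v$ and $\phi:=u-Rv$, we have $\phi=0$ on $E$, and since $h-R$ is H\"older on $\overline{\Om\cap B(z,r)}$ and vanishes on $E$,
\begin{equation}
\label{pp:overvanish}
\abs{\phi(z)}=\abs{h(z)-R(z)}\,v(z)\le C\,\dist(z,\pd\Om)^{\eps}\,v(z),
\end{equation}
so $\phi$ — and in particular the \emph{harmonic} function $\phi_0:=u-R(z_0)v$, which near $z_0$ obeys $\abs{\phi_0}\lesssim\abs{z-z_0}^{\eps}v$ — vanishes at $E$ strictly faster than a generic harmonic function vanishing on $E$. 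Equivalently, $h$ solves the degenerate elliptic equation $\divt(v^2\nabla h)=0$ in $\Om\cap B(z,r)$, with drift $2\nabla\log v$ that blows up at $E$, and $h=R$ on $E$.

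\emph{Blow-up and classification.} Rescaling around $z_0$ at scales $\rho\to0$ and normalising $v$ on a fixed corkscrew ball, the corkscrew, Harnack-chain and CDC hypotheses force the rescaled domains to converge to a limit $\Om_\infty$, the rescaled $v$ to a positive harmonic $v_\infty$ on $\Om_\infty$ vanishing on $\pd\Om_\infty$, and — since $h\to R(z_0)$ continuously at $z_0$ — the rescaled $u$ to $R(z_0)v_\infty$, so $u$ and $v$ have the \emph{same} leading blow-up. To see the next-order term one tracks $\phi_0$ by an Almgren-type frequency \emph{relative to} $v$ (monitoring $\int_{\pd B_\rho}\phi_0^2\,d\sigma/\int_{\pd B_\rho}v^2\,d\sigma$, or its analogue for $\divt(v^2\nabla\cdot)$); its monotonicity together with \eqref{pp:overvanish} pins the relative frequency to a positive value and yields a nontrivial homogeneous blow-up of $\phi_0$ on $\Om_\infty$ vanishing on $\pd\Om_\infty$, whose boundary trace is the real-analytic germ $R-R(z_0)$ of finite vanishing order at $z_0$. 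The rigidity in ``$\phi_0/v$ has a real-analytic boundary trace'' should then force $(\Om_\infty,v_\infty)$ to be a half-space with its linear harmonic function, at every $z_0$ away from the set $\Sigma$ where the frequency or the leading term degenerates; at $z_0\notin\Sigma$, $\pd\Om$ is Reifenberg-flat, hence — combining flatness with the CDC — a $C^{1,\al}$ hypersurface near $z_0$.

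\emph{From $C^{1,\al}$ to real-analytic, and the exceptional set.} Near such a $z_0$, Schauder theory gives $u,v\in C^{1,\al}$ up to $E$ with $\pd_\nu v>0$ (Hopf), so \eqref{nicera} reads $\pd_\nu u=R\,\pd_\nu v$ on $E$; hence $\phi=u-Rv$ carries the Cauchy data $\phi=\pd_\nu\phi=0$ on $E$ together with the real-analytic-coefficient equation $\Delta\phi=-v\Delta R-2\nabla v\cdot\nabla R$. Feeding this overdetermined one-phase problem, with the non-degeneracy $\pd_\nu v>0$, into the hodograph/Legendre-transform scheme of Kinderlehrer--Nirenberg — or, equivalently, into the higher-order boundary Harnack results of De Silva--Savin, which promote ``$u/v$ real-analytic on $E$'' to ``$\pd\Om$ real-analytic'' once $\pd\Om\in C^{1,\al}$ — shows $\pd\Om$ is real-analytic near $z_0$. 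Since the class of blow-up data is closed under further blow-ups, a Federer dimension-reduction argument bounds $\dim\Sigma\le n-2$, which is the exceptional set in the statement.

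\emph{The main obstacle} is the blow-up step, and it splits into the two difficulties that make this a statement ``below the codimension-$1$ threshold''. First, for a general one-sided NTA domain $v$ need not be comparable to $\dist(\cdot,\pd\Om)$: it can vanish to high order, so there is no linear non-degeneracy and the classical Alt--Caffarelli machinery is unavailable; one is forced to blow up \emph{against $v$ itself}, which demands a monotonicity formula for $\divt(v^2\nabla\cdot)$ robust under these weak geometric hypotheses. Second, in the genuinely fractal regime — e.g.\ $E$ a conformal Cantor repeller, where ``$\pd\Om$ real-analytic'' is impossible — the only available conclusion is $u=\la v$; this is a \emph{strong unique continuation} statement for $\divt(v^2\nabla\cdot)$, namely that the solution $h-R(z_0)$ whose boundary trace $R-R(z_0)$ is real-analytic and vanishes on a large enough subset of the fractal $E$ must vanish identically, so that a nontrivial fractal free boundary cannot coexist with a real-analytic ratio unless $u$ and $v$ are proportional. (The case $R\equiv c$ is the degenerate limit: then $\phi_0=u-cv$ is harmonic, vanishes on all of $E$, and \eqref{pp:overvanish} gives $\phi_0=o(v)$ at $E$; one expects a boundary unique-continuation principle relative to $v$ on the one-sided NTA set $E$ to force $\phi_0\equiv0$, i.e.\ $u=cv$.) Proving these two rigidity statements is the crux, and it is precisely here that ideas beyond \cite{ENV, NTV, DM, DFM} appear to be needed.
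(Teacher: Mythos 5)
This statement is one of the paper's open conjectures: the paper offers no proof of Conjecture~\ref{conj1}, and only establishes a very special planar case of the weaker Conjecture~\ref{conj4} (Theorem~\ref{Rth}, under the extra hypotheses that $\pd\Om\subset\bR$, that the Green's function is comparable to $\dist(\cdot,\pd\Om)^\delta$, and that $\pd\Om$ is a regular Cantor set), by a completely different route: symmetry, the Pompeiu/Cauchy integral representation to bootstrap $\Phi=e^\al v-u$ into $C^{1+\frac{\delta}{2+\delta}}$, Hruschev's removability lemma to extend $H=A\,\pd v/\pd z-\pd u/\pd z$ across $J$, and Rolle's theorem on the complementary intervals to force $A$ constant. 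So your proposal cannot be "the same approach as the paper" — there is nothing to match it against — and it must be judged on its own.

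Judged on its own, it is a program rather than a proof, and the gaps you yourself flag as "the crux" are exactly the content of the conjecture. Concretely: (a) the monotonicity of an Almgren-type frequency for $\divt(v^2\nabla\cdot)$ requires quantitative control of $v$ near $E$ (doubling of $v^2\,dm$, a Rellich--Ne\v{c}as identity, some a.e.\ differentiability of the boundary); under only CDC, corkscrew and Harnack chains, $v$ need not be comparable to any power of the distance and $E$ may be a Cantor set of dimension $<n-1$, so no such formula is known and your blow-up limits need not exist along all scales, let alone be homogeneous. (b) The classification step "rigidity forces $(\Om_\infty,v_\infty)$ to be a half-space with its linear function" is false as stated in this generality: blowing up a self-similar conformal repeller at a typical point yields another Cantor-type set, not a half-space, and the real-analyticity of the trace $R-R(z_0)$ on a fractal $E$ gives no obvious leverage on $\Om_\infty$ — ruling this scenario out is precisely Conjectures~\ref{conj3} and~\ref{conj4}, which the paper leaves open. (c) Even where flatness is achieved, "Reifenberg-flat plus CDC implies $C^{1,\al}$" is not a theorem; Reifenberg flatness alone gives only H\"older parametrizations, and upgrading to $C^{1,\al}$ in one-phase problems normally requires a nondegeneracy of the type $v\gtrsim\dist(\cdot,\pd\Om)$ that is unavailable here. (d) Finally, the constant-$R$ case, which you defer to a "boundary unique continuation principle relative to $v$", is again an unproved rigidity statement on fractal boundaries. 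In short, the proposal reassembles the conjecture into several sub-conjectures each at least as hard as known open problems below the codimension-one threshold; it is a reasonable roadmap but not a proof.
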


\begin{conjecture}
\label{conj2}
Let $n=2$. Let $\Om$ be our one sided NTA, $u, v$ be as above, in particular, let \eqref{nicera} hold, but let also $R= |A|$, where $A$ is a  non-constant holomorphic function on $B(z, r)$. Then either $u=\la v$ for some constant $\la$ or  $B(z, r)\cap \pd\Om$ is real analytic maybe with the exception of a set of isolated points.
\end{conjecture}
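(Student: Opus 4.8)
\smallskip
\noindent\textit{A possible approach to Conjecture~\ref{conj2}.}
The idea is to encode $u,v$ into a single meromorphic function on $\Om\cap B(z,r)$ whose real locus is forced to contain $E$. Since $u,v$ are harmonic, $\partial_z u=\tfrac12(u_x-iu_y)$ and $\partial_z v$ are holomorphic on $\Om\cap B(z,r)$, with $\partial_z v\not\equiv 0$ (otherwise $v$ would be a nonnegative constant vanishing on the nonempty $E$), so $\Phi:=\partial_z u/\partial_z v$ is meromorphic there. The first step is to show that $\Phi$ extends continuously to $E$ with
\begin{equation}\label{pp:phiR}
\Phi|_E=R=|A|,\qquad\text{in particular $\Phi|_E$ is real and nonnegative.}
\end{equation}
For this, set $w=u/v$; then $\divt(v^2\nabla w)=0$, so combining the interior gradient estimate for this (uniformly elliptic, with coefficients of bounded logarithmic $C^1$-norm on interior balls) equation with the H\"older bound \eqref{BHP} for $w$ on $\overline{\Om\cap B}$ gives $|\nabla w(x)|\lesssim \dist(x,\pd\Om)^{\eps-1}$. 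Since $\partial_z w=\frac{\partial_z v}{v}(\Phi-w)$ we get
\[
|\Phi-w|=\frac{v}{|\partial_z v|}\,|\partial_z w|\ \lesssim\ \frac{v}{|\nabla v|}\,\dist(x,\pd\Om)^{\eps-1}\ \lesssim\ \dist(x,\pd\Om)^{\eps}\ \to\ 0\quad(x\to\pd\Om),
\]
using the non-degeneracy $|\nabla v|\gtrsim v/\dist(\cdot,\pd\Om)$ near $E$ (a standard feature of one sided NTA boundaries). Since $w\to R=|A|$ on $E$ by \eqref{nicera}, claim \eqref{pp:phiR} follows, and hence $E\subseteq\overline{\Phi^{-1}(\bR)}$.

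\smallskip
Next, a dichotomy on $\Phi$. If $\Phi$ is constant, $\Phi\equiv c$, then by \eqref{pp:phiR} $c=R(\zeta)\ge 0$ for every $\zeta\in E$, so $c$ is real; consequently $\partial_z(u-cv)=0$, so $u-cv$ is anti-holomorphic and real, hence constant, and it vanishes on $E$, so $u=\la v$ with $\la=c$. If instead $\Phi$ is non-constant, then $\Phi^{-1}(\bR)$ is a real-analytic $1$-dimensional subset of $\Om\cap B$ which, away from the discrete set of its singular points (critical points and zeros/poles of $\Phi$, all interior to $\Om\cap B$), is a locally finite disjoint union of real-analytic arcs; hence $E$ sits inside a single real-analytic arc $\Gamma$ near all but an isolated set of its points.

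\smallskip
It remains to upgrade ``$E\subseteq\Gamma$'' to ``$E$ is a full real-analytic arc off an isolated set'', i.e.\ to exclude that $E$ is a thin (say Cantor-type) subset of $\Gamma$ while $\Phi$ is non-constant. This is the crux, and is where the hypothesis $R=|A|$ for a \emph{single} holomorphic $A$ (as opposed to a general real-analytic $R$, as in Conjecture~\ref{conj1}) should be essential. From \eqref{pp:phiR} one gets $\Phi^{2}/A\to\overline A$ on $E$; writing $\Psi:=\Phi^{2}/A$ (meromorphic on $\Om\cap B$), this is a Schwarz-reflection-type identity $\Psi=\overline A$ on $E$, which says that $A-\overline\Psi$ vanishes on $E$, i.e.\ $E$ lies in the common zero set of the two harmonic functions $\operatorname{Re}(A-\Psi)$ and $\operatorname{Im}(A+\Psi)$. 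Two such analytic zero curves meet in a discrete set unless they coincide; since $E=\pd\Om$ is non-polar by the CDC it is not discrete, so these curves must coincide near each point of $E$ and agree with $\Gamma=\overline{\Phi^{-1}(\bR)}$, after which a capacity-density argument should force $E$ to fill that common analytic arc, which is real-analytic away from the isolated singular points of $\Phi$. Making this rigorous --- in particular quantifying the rigidity produced by a single $A$, and verifying the non-degeneracy $|\nabla v|\gtrsim v/\dist$ near $E$ used in the first step (standard for NTA domains, to be checked for the general one sided NTA class) --- is where the main difficulty lies. For an arbitrary real-analytic $R$ (Conjecture~\ref{conj1}) no such reflection identity is available, which is presumably why one expects there only the weaker exceptional set of dimension $n-2$.
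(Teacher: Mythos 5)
First, be aware that the statement you are addressing is posed in the paper as a \emph{conjecture}: no proof of it is given there. The paper only records that the simply (or finitely) connected case is settled in the cited works of Vardakis--Volberg by reduction to Sakai's theorem on Schwarz functions, and it explicitly names infinitely connected domains as the open obstruction. Your sketch reproduces, in different clothing, the first half of that known route --- producing from $R=|A|$ a Schwarz-function-type identity ($\Psi=\Phi^2/A$ with boundary values $\overline{A}$ on $E$) --- but it does not touch the part that is actually open. The central gap is this: $\Phi=\partial_z u/\partial_z v$ and $\Psi$ are defined only on $\Om\cap B(z,r)$, so ``$E$ lies in the common zero set of $\operatorname{Re}(A-\Psi)$ and $\operatorname{Im}(A+\Psi)$'' and ``two analytic zero curves meet discretely unless they coincide'' are meaningless until you have extended $\Phi$ analytically \emph{across} $E=\pd\Om\cap B(z,r)$, i.e.\ proved that $E$ is removable for it. That removability is exactly where all the work lies: in the one case the paper does handle (Theorem \ref{Rth}, with $\pd\Om\subset\bR$ a regular Cantor set and a Green's function bound), it is obtained only via a growth estimate $|H|\lesssim d^{\delta-1}$ combined with Hruschev's Phragm\'en--Lindel\"of-type Lemma \ref{Hruschev}; in the simply connected case it is Sakai's theorem. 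Your outline supplies neither a growth control on $\Phi$ near $E$ (indeed $\partial_z v$ may vanish at points accumulating on $\pd\Om$ in an infinitely connected domain, so $\Phi$ may have poles accumulating on $E$ --- this also undermines the non-degeneracy $|\nabla v|\gtrsim v/\dist$ on which your very first step rests) nor any removability mechanism.

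Second, the concluding step is wrong in spirit, not just incomplete. You assert that ``a capacity-density argument should force $E$ to fill that common analytic arc.'' But the paper's principal examples of one sided NTA domains are complements of conformal Cantor repellers: these satisfy the CDC and their boundaries are nowhere arcs (and, when $\pd\Om\subset\bR$, are thin Cantor subsets of the analytic arc $\bR$). So the CDC cannot force $E$ to fill an arc; in that regime the correct conclusion is the \emph{other} horn of the dichotomy, $u=\la v$, which is the content of Conjecture \ref{conj3} and of Theorem \ref{Rth}. Your sketch contains no mechanism for producing the constant $\la$ when $E$ is thin: in the paper's special case this comes from $H\equiv 0$ after removability, Rolle's theorem on the complementary intervals of the Cantor set, and real-analyticity of $\al$ --- none of which has an analogue in your outline. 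In short, your proposal is a reasonable heuristic reduction to a Schwarz-function problem, but the conjecture remains open precisely because the steps you defer (removability across a possibly infinitely connected, possibly Cantor-type $E$, and the derivation of $u=\la v$ when $E$ is not an arc) are the substance of the problem.
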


This latter conjecture is proved in \cite{VaVo}, \cite{VaVo1} for the case of simply connected $\Om$. The paper uses a very powerful  result of Sakai \cite{Sa} that describes the so-called Schwarz functions in arbitrary planar domains. However, the reduction of Conjecture \ref{conj2} to Sakai's theorem works only in simply (or finitely) connected domains. Any solution of Conjecture \ref{conj2} should first eliminate the possibility of infinitely connected domain. For the simply connected case Conjecture \ref{conj2} turned out to be closely related to the so-called Nevanlinna domains, which were extensively studied e.g. in \cite{DK},  \cite{BaF}, \cite{BeF}, \cite{BeBoF}, \cite{MM} and the literature cited therein.

\begin{conjecture}
\label{conj3}
Let $n=2$. Let Let $\Om$ be our one sided NTA, $ u, v$ be as above, in particular, let \eqref{nicera} hold, but let also $R= |A|$, where $A$ is a non-constant holomorphic function on $B(z, r)$.  Let, in addition, $\pd\Om$ be a regular Cantor set in the following sense:
\begin{equation}
\label{regular}
\exists a>1:\, \forall k\in \bZ_+\,\, \text{the set}\,\, \{z:\dist (z, \pd\Om)< a^{-k}\}= V_1\cup\dots V_{m_k},
\end{equation}
where $\text{length}(V_i )\le C a^{-k}$ and $m_k\le C a^{\delta k}$, for some $\delta>0$.
Then  $u=\la v$ for some constant $\la$.
\end{conjecture}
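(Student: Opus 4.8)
\medskip
\noindent\emph{Proof idea.} I would argue by contradiction: assume $u\neq\lambda v$ for every constant $\lambda$, and manufacture from $(u,v)$ a holomorphic function defined on a genuine two–dimensional neighbourhood of a piece of $\partial\Omega$ whose boundary trace on that piece equals $|A|$; since that piece of $\partial\Omega$ is a \emph{perfect} set, the identity principle will then force a collapse. First localise: as $A$ has only finitely many zeros and critical points in $\overline{B(z,r)}$ while $\partial\Omega$ is infinite, choose $\zeta_0\in\partial\Omega$ and a small disc $O=B(\zeta_0,\rho)\subset B(z,r)$ on which, together with its reflection $\overline O$, $A$ and $A'$ do not vanish; set $E_0:=O\cap\partial\Omega$, $D:=O\setminus E_0=O\cap\Omega$. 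Then $\log|A|=\operatorname{Re}\log A$ is harmonic on $O$, $|A|\asymp1$ on $\overline O$, and $E_0$ is again perfect, totally disconnected, and obeys \eqref{regular} together with the capacity density condition. In the range $\delta<1$ — the one in which \eqref{regular} drops below the codimension one barrier — the covering in \eqref{regular} gives $\mathcal H^{1}(E_0)=0$, so by Painlev\'e's theorem $E_0$ has zero analytic capacity and is removable for bounded holomorphic functions. (I would regard $\delta\ge1$ as outside the scope of this method.)

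\medskip
\noindent Next introduce the holomorphic functions $g:=\partial_z u$, $h:=\partial_z v$ on $D$ and the meromorphic quotient $\Theta:=g/h$. Using the boundary Harnack inequality \eqref{BHP} together with interior gradient estimates on Whitney balls, one shows that $\Theta$ extends continuously to $E_0$ with $\Theta=|A|$ there: on a Whitney ball $Q$ at distance $d\asymp\operatorname{diam}Q$ from $E_0$, \eqref{BHP} gives $u/v=c_Q+O(d^{\varepsilon})$ about its central value $c_Q$, the harmonic function $u-c_Qv$ is $O(d^{\varepsilon})v$ on $Q$ and hence has gradient $O(d^{\varepsilon-1}\sup_Q v)$ at the centre, while $|\nabla v|\asymp v/d$ there, so that $\Theta=c_Q+O(d^{\varepsilon})\to|A(\zeta)|$ as $Q\to\zeta\in E_0$. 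In particular $\Theta$ is bounded near $E_0$ away from the critical points of $v$.

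\medskip
\noindent If $u$ is not a multiple of $v$, then $\Theta$ has poles — at the critical points of $v$ not shared by $u$ — and these cluster on $E_0$ through the gaps of the Cantor set; this is exactly where the \emph{quantitative} content of \eqref{regular} must be spent. There are at most $m_k\le Ca^{\delta k}$ such gaps at scale $a^{-k}$, each of size $\lesssim a^{-k}$, and one uses this to divide out of $\Theta$ a convergent Blaschke–type product carried by the poles so that the remaining factor is a \emph{bounded} holomorphic function in a punctured neighbourhood of $E_0$; by the removability from the first step it extends holomorphically across $E_0$, and unwinding the product yields $\Theta$ holomorphic on a connected neighbourhood $N$ of $E_0$ with $\Theta|_{E_0}=|A|$. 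Now perfectness finishes the proof. If $|A|$ is constant on $E_0$, the identity principle forces $\Theta\equiv\lambda$ on $N$, hence on $D$, so $\partial_z(u-\lambda v)\equiv0$; being real, $u-\lambda v$ is then constant, and the boundary condition kills the constant, so $u=\lambda v$ — contradiction. If $|A|$ is non–constant on $E_0$, then $\Theta^{2}=|A|^{2}=A\overline A$ on $E_0$ gives $\overline{A(z)}=\Theta(z)^{2}/A(z)=:H(z)$ on $E_0$ with $H$ holomorphic near $E_0$; writing $A^{\star}(w):=\overline{A(\overline w)}$ this reads $A^{\star}(\overline z)=H(z)$, i.e. $\overline z=(A^{\star})^{-1}(H(z))=:S(z)$ on $E_0$ with $S$ holomorphic near $E_0$. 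Thus $E_0$ lies in the zero set of the real–analytic function $z\mapsto S(z)-\overline z$ on $O$; since $E_0$ is not discrete, this set is not discrete, so $\partial\Omega$ is, near $\zeta_0$, contained in a real–analytic curve — equivalently, $S$ is a bona fide Schwarz function of $\partial\Omega$. Inserting this into Sakai's theorem exactly as in the finitely connected case \cite{VaVo,VaVo1,Sa}, now with a genuinely holomorphic Schwarz function available, forces $\partial\Omega$ near $\zeta_0$ to be a real–analytic arc up to isolated points, which a totally disconnected perfect set cannot be. In all cases, $u=\lambda v$.

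\medskip
\noindent The soft parts are the second and fourth steps: boundary Harnack plus interior estimates for the boundary trace of $\Theta$, and the identity principle on a perfect set together with Sakai's classification. The hard part — and the only place where \eqref{regular} is used beyond the bound $\dim\partial\Omega\le\delta$ — is the middle step: controlling the poles of $\partial_z u/\partial_z v$ that the Cantor structure of $\partial\Omega$ forces to accumulate on $\partial\Omega$, so that a Painlev\'e–type removability theorem can be applied across $\partial\Omega$.
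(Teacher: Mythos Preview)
First, note that the statement you are trying to prove is listed in the paper as an \emph{open conjecture}; the paper does not claim a proof of it. What the paper does prove is Theorem~\ref{Rth}, which is the related Conjecture~\ref{conj4} under two additional hypotheses: the Green function estimate \eqref{green2} and the very restrictive assumption $\partial\Omega\subset\bR$. The author says explicitly that he does not know how to remove the latter. So you are attempting strictly more than the paper achieves.

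Your argument has a genuine gap, and it is precisely the step you flag as ``the hard part''. You introduce $\Theta=\partial_z u/\partial_z v$ and observe that its poles are the critical points of $v$; you then assert that the covering bound $m_k\le Ca^{\delta k}$ in \eqref{regular} lets you ``divide out of $\Theta$ a convergent Blaschke--type product carried by the poles'' so that what remains is bounded and hence removable across $E_0$. But \eqref{regular} is a statement about the geometry of $\partial\Omega$, not about the zero set of $\partial_z v$; nothing in your outline links the two. You give no reason why the critical points of $v$ lie one-per-gap, or satisfy any Blaschke-type summability, or why the quotient by your product would be bounded near $E_0$. Relatedly, the estimate ``$|\nabla v|\asymp v/d$ at Whitney centres'' that you use to get the boundary trace of $\Theta$ is exactly what fails at critical points of $v$, and you have no mechanism to prevent those from accumulating densely on $E_0$. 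Without this step your holomorphic extension of $\Theta$ across $E_0$ is unjustified, and the rest of the argument (identity principle, Schwarz function, Sakai) never gets off the ground.

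It is instructive to compare with the paper's partial result. The paper works not with the quotient $\Theta=\partial_z u/\partial_z v$ but with the \emph{linear} combination $H=A\,\partial_z v-\partial_z u$, which has \emph{no poles}; this sidesteps entirely the problem of controlling critical points of $v$. The price is that $H$ has a priori growth like $d^{-(1-\delta)}$ rather than being bounded, so Painlev\'e removability is unavailable; instead the paper invokes Hruschev's Phragm\'en--Lindel\"of lemma, and this is exactly where both extra hypotheses are spent --- \eqref{green2} to get the polynomial growth bound on $H$, and $\partial\Omega\subset\bR$ to have an interval on which to run the one-sided Phragm\'en--Lindel\"of argument and to exploit the symmetry $u_y=v_y=0$. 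If you want to push your approach further, the honest route is either to control the critical points of $v$ (which appears hard and is not in the literature), or to abandon the quotient in favour of $A\,\partial_z v-\partial_z u$ and find a substitute for Hruschev's lemma that does not require $\partial\Omega$ to sit inside a line.
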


The conclusion is natural. Cantor structure of the boundary unequivocally says that boundary is {\it not} nice and definitely is {\it not} real analytic. Therefore, the only option that should exist for $u, v$ is to be proportional.

\begin{conjecture}
\label{conj4}
Let $n=2$. Let $\Om$ be our one sided NTA, $u, v$ be as above, in particular, let \eqref{nicera} hold, but let also $R= |A|$, where $A$ is a  holomorphic function on $B(z, r)$.  Let, in addition, $\Om$ be such that its Green's function (with some pole) satisfies
\begin{equation}
\label{green}
c_1\dist(z, \pd\Om)^\delta \le G(z, p) \le c_2\dist(z, \pd\Om)^\delta,\quad \text{for some}\,\, \delta\in (0,1]
\end{equation} 
for all $z\in \Om$ sufficiently close to $\pd\Om$ and for some $\delta>0$.
Then  either $u=\la v$ for some constant $\la$ or $\delta=1$ and $B(z, r)\cap \pd\Om$ is piecewise real analytic.
\end{conjecture}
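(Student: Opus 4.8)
The plan is to base everything on the single holomorphic function $H:=\partial_z u/\partial_z v$, and to split according to whether $\delta=1$ or $\delta<1$. First one records the normalisations coming from \eqref{BHP}: since the CDC makes every boundary point regular, $G(\cdot,p)$ is positive and harmonic near $\pd\Om$ and vanishes there, so the boundary Harnack principle applied to $(u,G)$, $(v,G)$ and $(u,v)$ gives $u\asymp v\asymp G\asymp\dist(\cdot,\pd\Om)^{\delta}$ near $E$ and shows $u/v$ extends continuously to $E$ with $u/v=|A|$. Interior gradient estimates and the Harnack chain condition give $|\nabla u|\asymp u/\dist(\cdot,\pd\Om)\asymp\dist(\cdot,\pd\Om)^{\delta-1}$ near $E$, and likewise for $v$; as $\partial_z u$ and $\partial_z v$ are holomorphic and single valued on $O\cap\Om$ regardless of its connectivity, $H$ is holomorphic on $O\cap\Om$ with $c_1\le|H|\le c_2$ near $E$. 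Finally, applying the interior estimate to the harmonic function $u-|A(\zeta)|v$, which is $o(v)$ as $z\to\zeta\in E$, yields $\nabla u/\nabla v\to|A(\zeta)|$, so $H$ extends continuously to $E$ with $H|_E=|A|$.

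In the case $\delta<1$ the plan is to show that $\pd\Om$ is thin enough for $H$ to extend across it. One first notes that $\delta<1$ is incompatible with local simple connectivity: for a Riemann map $\varphi$ of a simply connected piece of $\Om$ one has $G\circ\varphi\asymp 1-|w|$ and $\dist(\varphi(w),\pd\Om)\asymp(1-|w|)|\varphi'(w)|$, so $G\asymp\dist^{\delta}$ with $\delta<1$ forces $|\varphi'(w)|\asymp(1-|w|)^{(1-\delta)/\delta}\to 0$ on a boundary arc of positive measure, which the F.\ and M.\ Riesz / Privalov uniqueness theorem forbids for the non-vanishing $\varphi'$. Hence under $\delta<1$ one should be able to show that $\Om$ locally fills the complement of $\pd\Om$ and that $\pd\Om$ carries no rectifiable arc of positive length: a blow-up of $\Om$ along such an arc would be a half-plane or a slit plane, hence homogeneous of degree $1$, giving $\delta=1$, because a positive harmonic function vanishing on the boundary of a planar cone and homogeneous of degree $1$ is, up to rotation, $y$ or $|y|$. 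This should give that $\pd\Om\cap O'$ has $\sigma$-finite length and is purely unrectifiable, hence removable for bounded holomorphic functions, so $H$ extends holomorphically to $\hat H$ on a ball $O'\ni z$ with $\hat H=|A|$ on $\pd\Om\cap O'$, and therefore the real-analytic function $\hat H^{2}-|A|^{2}$ vanishes on $\pd\Om\cap O'$. Since the zero set of a nonzero real-analytic function is locally a finite union of analytic arcs and isolated points, and $\pd\Om$ cannot lie in such a set --- near a point of it $\Om$ would be simply connected, forcing $\delta=1$ --- one gets $\hat H^{2}\equiv|A|^{2}$ on $O'$. Then $\hat H^{2}$ is holomorphic and real-valued, hence a constant $\la^{2}$, so $H$ is constant, $\partial_z(u-\la v)\equiv 0$, and $u-\la v$ is real and anti-holomorphic, hence constant, and vanishes on $E$; thus $u=\la v$. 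The one residual configuration --- $\pd\Om$ a totally disconnected subset of a single analytic arc, where $|A|^{2}$ on that arc is a product of holomorphic functions and $\hat H^{2}$ need not be constant --- should be eliminated by a period argument showing that $\partial_z u=\hat H\,\partial_z v$ cannot have a single-valued real part vanishing on all of $\pd\Om$ unless $\hat H$ is constant.

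In the case $\delta=1$ the plan is to reduce to the simply connected situation solved in \cite{VaVo}, \cite{VaVo1} via Sakai's theorem \cite{Sa}. A blow-up/compactness argument (again classifying degree-$1$ homogeneous solutions on planar cones) should show $\pd\Om$ is flat at $\mathcal H^{1}$-a.e.\ point of $E$; combined with the corkscrew and the CDC this should give that $\pd\Om\cap O$ is a rectifiable Jordan arc away from a closed set $\Sigma$ of isolated points, and that near each $\zeta\in E\setminus\Sigma$ the domain $\Om$ is a chord-arc subdomain, in particular simply connected. On such a piece, writing $u=\Re F$ and $v=\Re G$ with $F,G$ holomorphic, the identity $u/v=|A|$ on $E$ becomes, in terms of normal derivatives, $(F'/G')^{2}=|A|^{2}$ on the arc $E$, which is exactly the input of \cite{VaVo}, \cite{VaVo1}: either $F'/G'$ is locally constant, whence $u=\la v$ as above, or $E$ is real analytic near $\zeta$. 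Patching these arcs along $E\setminus\Sigma$, with $\Sigma$ discrete, then yields that $E$ is piecewise real analytic.

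The hard part is the case $\delta=1$, and specifically the claim that near regular boundary points $\Om$ is locally simply connected --- equivalently, that $\delta=1$ excludes the infinitely connected possibility --- which is precisely the obstruction pointed out after Conjecture~\ref{conj2}; this, together with the control of the exceptional set $\Sigma$ and the patching of the analytic arcs, is where the main work lies. A secondary but still genuine difficulty is the metric lemma needed when $\delta<1$, namely extracting $\sigma$-finiteness of length and pure unrectifiability of $\pd\Om$ from the single uniform estimate $G\asymp\dist^{\delta}$, together with the period computation that disposes of the remaining Cantor-subset-of-an-arc case.
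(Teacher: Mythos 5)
This statement is labelled a \emph{conjecture} in the paper and is not proved there: the author explicitly proves only the special case of Theorem~\ref{Rth}, under the additional strong hypotheses that $\pd\Om$ is a regular Cantor set contained in $\bR$, and obtains only the alternative $u=\la v$. Your text is therefore being measured against an open problem, and as written it is a programme rather than a proof: the steps you yourself flag as ``the hard part'' (eliminating infinite connectivity when $\delta=1$; extracting $\sigma$-finite length and pure unrectifiability of $\pd\Om$ from $G\asymp\dist^{\delta}$ when $\delta<1$) are precisely the obstructions the paper identifies after Conjecture~\ref{conj2} and in Conjecture~\ref{conj5}, and no argument is supplied for them. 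Note also that without an extra hypothesis such as \eqref{regular}, nothing in Conjecture~\ref{conj4} prevents $\pd\Om$ from having non-$\sigma$-finite length, so the removability step in your $\delta<1$ branch is not merely unproved but needs a hypothesis you do not have.

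There is in addition a concrete error at the foundation of your construction. You set $H=\pd_z u/\pd_z v$ and claim $c_1\le|H|\le c_2$ near $E$ via $|\nabla v|\asymp v/\dist(\cdot,\pd\Om)$. The upper bound $|\nabla v|\lesssim v/d$ is standard, but the lower bound fails exactly in the infinitely connected situation the conjecture is about: when $\pd\Om$ is a Cantor-type set, $v$ (e.g.\ the Green's function) has critical points accumulating on $\pd\Om$, so $\pd_z v$ vanishes at points arbitrarily close to $E$ and $H$ is only meromorphic, with poles marching into the boundary. This destroys both the boundedness of $H$ and its continuous extension to $E$, on which both of your branches rest. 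The paper's device in its special case is instructive here: it works with the \emph{difference} $A\,\pd_z v-\pd_z u$ rather than the quotient, proves the decay $|A\,\pd v-\pd u|\lesssim d^{\gamma-1}$ by a Cauchy--Pompeiu/Sobolev bootstrap, and removes the singular set with Hruschev's Phragm\'en--Lindel\"of lemma, which tolerates the blow-up $|H|\lesssim d^{-(1-\delta)}$ instead of requiring boundedness. If you want to salvage your outline, replace the quotient by this difference throughout; but the topological and metric claims (chord-arc structure when $\delta=1$, unrectifiability and $\sigma$-finiteness when $\delta<1$) would still have to be proved, and they are the open content of the conjecture.
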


It is easy to see that assumption \eqref{green} (if $\delta<1$) implies that $\pd\Om$ is  a regular Cantor set if $\pd\Om$ is a conformal self-similar set, see \cite{AV} . So the proof of Conjecture \ref{conj3} implies Conjecture \eqref{conj4} at least for conformal self-similar sets. But we want to formulate a very ``easy" conjecture that claims  part of Conjecture \ref{conj4}. 
The following statement looks very easy, but I can prove it only in one special case (see below). It is interesting in all  dimensions $n\ge 2$.

\begin{conjecture}
\label{conj5}
Let for $\Om\subset \bR^n$ we have \eqref{green} with $\delta>0$. Then 1) $\delta=1$ necessarily; 2) $\pd\Om$ has a certain weak smoothness, at least it is rectifiable.
\end{conjecture}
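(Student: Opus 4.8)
The plan is to convert the two-sided bound \eqref{green} on $G$ into a two-sided (Ahlfors-regularity) estimate for harmonic measure, and then read off both the exponent $\delta$ and the rectifiability of $\pd\Om$ from it. Write $d(z)=\dist(z,\pd\Om)$, fix the pole $p$, and assume $\Om$ bounded and Greenian so that $\om^p$ is a probability measure on $\pd\Om$. The upper bound $G(z,p)\le c_2 d(z)^\delta$ forces $G(\cdot,p)\to0$ uniformly at $\pd\Om$, so by a quantitative Wiener criterion $\pd\Om$ satisfies the CDC, while the lower bound $G(z,p)\ge c_1 d(z)^\delta$ supplies interior corkscrews; together with the Harnack-chain condition (part of the paper's standing ``one-sided NTA'' hypothesis) this makes the Caffarelli--Fabes--Mortola--Salsa comparison of $G$ with harmonic measure available. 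Evaluating it at a corkscrew point $A_{\xi,r}$ for the surface ball $B(\xi,r)\cap\pd\Om$ and using $G(A_{\xi,r},p)\asymp d(A_{\xi,r})^\delta\asymp r^\delta$ gives
\begin{equation*}
\om^p\big(B(\xi,r)\cap\pd\Om\big)\ \asymp\ r^{n-2}G(A_{\xi,r},p)\ \asymp\ r^{\,n-2+\delta}
\end{equation*}
for every $\xi\in\pd\Om$ and every small $r$. Hence $\om^p$ is Ahlfors regular of dimension $s:=n-2+\delta$; in particular $\pd\Om=\supp\om^p$ is Ahlfors $s$-regular, $\dim_H\pd\Om=s$, and $\om^p\ll\mathcal H^s$ with density bounded above and below.

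To obtain $\delta=1$ I would prove $\delta\ge1$ and $\delta\le1$ separately. For $\delta\ge1$: in the geometry of the problem $\pd\Om$ genuinely disconnects $\Om$ from a nonempty exterior --- this is where the nonempty-exterior hypothesis enters --- so $\pd\Om$ has topological dimension at least $n-1$, whence $s=\dim_H\pd\Om\ge n-1$, i.e.\ $\delta\ge1$. For $\delta\le1$: since $\om^p$ is Ahlfors $s$-regular it is mutually absolutely continuous with $\mathcal H^s|_{\pd\Om}$ and cannot be carried by a set of dimension smaller than $s$; but in the plane harmonic measure always lives on a set of Hausdorff dimension at most $1$ (Jones--Wolff), so $s\le1=n-1$, i.e.\ $\delta\le1$. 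Combining the two gives $\delta=1$.

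With $\delta=1$, part 2 follows quickly. Now $\om^p$ is Ahlfors $(n-1)$-regular, so $\pd\Om$ is Ahlfors $(n-1)$-regular and $\om^p\asymp\mathcal H^{n-1}|_{\pd\Om}$; in particular $\om^p\in A_\infty(\mathcal H^{n-1}|_{\pd\Om})$. On a one-sided NTA domain with Ahlfors $(n-1)$-regular boundary this forces $\pd\Om$ to be uniformly rectifiable, by the solution of the one-phase problem for harmonic measure (Hofmann--Martell; Azzam--Hofmann--Martell--Mourgoglou--Tolsa); in particular $\pd\Om$ is rectifiable, the asserted weak smoothness (and in fact one gets more). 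A more hands-on alternative is a blow-up argument: $G\asymp d$ and compactness yield, at $\mathcal H^{n-1}$-a.e.\ $\xi\in\pd\Om$, a global nonnegative harmonic function on the tangent domain which vanishes on its boundary and is comparable there to the distance to it, and this rigidity forces the tangent to be a half-space, so $\pd\Om$ has an approximate tangent plane a.e.

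The crux is the inequality $\delta\le1$ outside the plane. For $n\ge3$ the dimension of harmonic measure can genuinely exceed $n-1$ (Wolff's examples), so Jones--Wolff has no replacement, and one would instead have to exploit that $\om^p$ is not merely of dimension $s$ but two-sided Ahlfors $s$-regular --- i.e.\ prove that the harmonic measure of a domain with nonempty exterior cannot be Ahlfors regular of dimension $>n-1$ --- which appears to need a genuinely new estimate; I expect this is why only a special case (essentially $n=2$, or a situation where $\dim_H\pd\Om\le n-1$ is known a priori) is treated. One should also keep in mind that a separation-type hypothesis is unavoidable: the complement of a self-similar Cantor set $K\subset\bR^2$ is one-sided NTA and satisfies \eqref{green} with $\delta=\dim_H K<1$, so the proof must --- as the sketch above does --- use that $\pd\Om$ disconnects the ambient space.
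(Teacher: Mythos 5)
Keep in mind that the statement you are proving is presented in the paper as an open conjecture: the author establishes it only in an extremely special case (Section~\ref{warm}: $\delta=1$ with $\pd\Om$ a conformal self-similar Cantor set of dimension $1$, shown to be impossible by deriving boundedness of the Cauchy transform from $|\nabla G|\le C$ and \eqref{om}, and then invoking Melnikov's symmetrization and the infinite Menger curvature of such sets; in $\bR^n$, $n>2$, the same scheme with Riesz transforms and the David--Semmes theorem of \cite{NTV}). Your attempt at the full statement has a genuine gap, and it sits exactly where one would expect: the inequality $\delta\ge 1$. You deduce it from ``$\pd\Om$ disconnects $\Om$ from a nonempty exterior, hence has topological dimension at least $n-1$,'' but no separation hypothesis appears in Conjecture~\ref{conj5}, and the paper's central examples are domains $\Om=\bC\setminus J$ with $J$ a Cantor set, for which $\pd\Om$ has topological dimension $0$ and separates nothing. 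You try to justify importing that hypothesis by asserting that the complement of a self-similar Cantor set of dimension $s<1$ already satisfies \eqref{green} with $\delta=s$; this assertion is unproved and is precisely what the conjecture denies. Indeed, by your own computation \eqref{green} forces $\om(B(\xi,r))\asymp r^{\delta}$ in the plane, hence $\dim\om=\dim\pd\Om$; for $\dim J=1$ this contradicts Carleson's theorem (Corollary~\ref{Carl}), and for $\dim J<1$ its failure is the content of the still-open Conjecture~\ref{conj6}. So you have produced neither a counterexample justifying the extra hypothesis nor a proof of $\delta\ge1$ without it. The second gap you acknowledge yourself: $\delta\le 1$ rests on Jones--Wolff and is confined to $n=2$, with no substitute for $n\ge 3$.

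The parts of your argument that do work are worth comparing with the paper. The passage from \eqref{green} to $\om(B(\xi,r))\asymp r^{n-2+\delta}$ via a CFMS-type comparison is the same as the paper's derivation of \eqref{om} (CDC plus corkscrew plus \eqref{green}). But once $\delta=1$ is granted, your route to rectifiability --- Ahlfors regularity of $\om$, hence $\om\in A_\infty(\cH^{n-1}|_{\pd\Om})$, hence uniform rectifiability by the one-phase free-boundary theorems for $1$-sided NTA domains --- is genuinely different from the paper's, which goes through boundedness of the Cauchy (or Riesz) transform and Menger curvature/David--Semmes. Both are legitimate pieces of heavy machinery and both would deliver part 2) of the conjecture in the case $\delta=1$; neither addresses the real difficulty, which is ruling out $\delta<1$ for fractal (non-separating) boundaries.
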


Those conjectures appear naturally in a certain unsolved problem in complex dynamics. Conjecture \ref{conj5} is especially intriguing because it looks so simple. Papers \cite{DM} and \cite{DFM} consider a very closely related problem of prevalent approximation of Green's function by the distance. But for co-dimensions bigger than one cases the Green's function is pertinent to a certain degenerate elliptic equation as far as I understand. Still the technique of these papers can probably be useful for solving this conjecture.

\section{Warming up}
\label{warm}
To warm up we consider here Conjecture \ref{conj5} in an extremely special case. We assume that \eqref{green} holds with $\delta=1$:
\begin{equation}
\label{green1}
c_1\dist(z, \pd\Om) \le G(z, p) \le c_2\dist(z, \pd\Om),
\end{equation} 
but $\pd\Om$ is a conformal self-similar Cantor set (conformal repeller)  of Hausdorff dimension $1$ in the sense of complex dynamics, for example, the reader can think that it is a $1/4$  corner Cantor set. We wish to lead \eqref{green1} to contradiction.  For brevity we adopt the notation
$$
J:=\pd\Om\,.
$$

Notice that \eqref{green1} immediately implies $\pd G|\le C$, hence the following Cauchy integral is bounded:
\begin{equation}
\label{cauchy}
C^\om(z):=\int_{J} \frac{d\om(\zeta)}{z-\zeta}  \in L^\infty (\bC\setminus J)\,.
\end{equation}
From here we can standardly  deduce  that maximal Cauchy integral
\begin{equation}
\label{star}
C^\om_*(z) := \sup_{r>0} \int_{J\setminus b(z, r)}\Big| \frac{d\om(\zeta)}{z-\zeta}\Big| \le C\quad \forall z\in J\,.
\end{equation}

In fact,  \eqref{star} follows from \eqref{cauchy} by mean value theorem for subharmonic functions if we use one more interesting inequality about harmonic measure (the pole is immaterial) of $\Om$:
\begin{equation}
\label{om}
\om (B(z, r)) \asymp r, \quad \forall z\in J\,.
\end{equation}
The latter inequality follows from CDC assumption on $J$, from \eqref{green} and from corkscrew assumption on $\Om$. Such folklore estimates can be found in many places, e.g. in \cite{AHM3TV}.

\medskip

The boundedness of Maximal Cauchy operator obtained in \eqref{star} implies that the Cauchy integral operator $f\to C^{fd\om}$ is a bounded operator in $L^2(\om)$.  But then \eqref{om} implies that he Cauchy integral operator $f\to C^{fd\cH^1}$ is a bounded operator in $L^2(\cH^1)$, where $\cH^1$ is the Hausdorff measure of dimension $1$ (length):
\begin{equation}
\label{op}
\int_J\Big|\int_J \frac{f (\zeta)d\cH^1}{z-\zeta}\Big|^2 d\cH^1(z) \le C\int_J |f(\zeta)|^2 d\cH^1(\zeta),\quad \forall f\in L^2(J, \cH^1)\,.
\end{equation}
Melnikov found out a beautiful symmetrization trick (see \cite{XT}) that deduces from \eqref{op} that Menger's curvature $c_2(\mu)$ of measure $\mu=d\cH^1|J$ is finite meaning that
\begin{equation}
\label{Mc}
c_2^2(\mu):=\int_J\int_J\int_J \frac{d\mu(z_1) s\mu(z_2) d\mu(z_3)}{R^2(z_1, z_2, z_3)} <\infty,
\end{equation}
where $R(z_1, z_2, z_3)$ is the radius of a circle passing through $z_1, z_2, z_3$.

It is left to refer to \cite{XT}:  for $1/4$ corner Cantor set this Menger's curvature is infinite. Exactly the same proof shows that Menger's curvature is infinite for any dimension $1$ conformal self-similar Cantor set (conformal repeller). 

\medskip

So Conjecture \ref{conj5} is proved for a very special case. And the proof uses several rather sophisticated results.
There should be easier proof that works in general situation. We are unaware of it.

\smallskip

However, it is interesting to notice that basically the same claim and the same proof will work for domains in $\bR^n$, $n>2$.  Let us  give a brief explanation: of course
Melnikov's symmetrization and his reduction to  Menger's curvature  does not work in $\bR^n$, $n>2$. This is a very well-known difficulty.
But, for example, paper \cite{NTV} would circumvent this difficulty. The fact is that the solution of David--Semmes conjecture implies that
domain satisfying \eqref{green1} should be rectifiable, the proof will be exactly as above, but instead of Cauchy transform, one would need to use singular Riesz transforms of singularity $n-1$ and use \cite{NTV}. One can be referred to \cite{JA}, where another proof is given.

\medskip

\begin{corollary}[Carleson, 1985]
\label{Carl}
Let $J$ be conformal self-similar Cantor set (conformal repeller)  of dimension $1$. Then the dimension of harmonic measure of $\Om:=\bC\setminus J$ is strictly less than $1$: $\dim \om <1$.
\end{corollary}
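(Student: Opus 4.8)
The plan is to argue by contradiction. Since $\om$ is carried by $J$ and $\dim J=1$, one has trivially $\dim\om\le 1$, so it suffices to rule out $\dim\om=1$; assume it. The heart of the matter is to upgrade the soft equality $\dim\om=1$ into the quantitative comparison $\om\asymp\cH^1|_J$. For this I would invoke the thermodynamic description of harmonic measure on a conformal repeller: $\om$ is (equivalent to) a Gibbs measure for a H\"older potential, it is exact--dimensional with $\dim\om=h_\om/\chi_\om$ (entropy over Lyapunov exponent), while $\cH^1|_J$, after normalization and up to a density bounded above and below, is the equilibrium measure of the potential $-\log|f'|$, whose topological pressure vanishes precisely because $\dim J=1$ (Bowen's equation). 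The variational principle gives $h_\om-\chi_\om\le P(-\log|f'|)=0$, with equality iff $\om$ is the (unique, since the symbolic dynamics is the full shift on $d\ge 2$ symbols) equilibrium state, i.e. iff $\om\asymp\cH^1|_J$; and equality is exactly the hypothesis $\dim\om=1$. In particular $\om$ is $1$--Ahlfors regular: $\om(B(z,r))\asymp r$ for $z\in J$.

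With $\om(B(z,r))\asymp r$ in hand, the CFMS/boundary--Harnack estimates available in one sided NTA domains (valid here since $\Om$ satisfies CDC, the corkscrew condition and the Harnack chain condition) give, for $z$ close to $J$ and $x\in\pd\Om$ nearest to $z$,
\[
G(z,p)\asymp \om^p\bigl(B(x,\dist(z,\pd\Om))\bigr)\asymp \dist(z,\pd\Om),
\]
so that \eqref{green1} holds. At this point we are exactly in the setting of the Warming up section: \eqref{green1} forces $|\pd G|\le C$, hence $C^\om\in L^\infty(\bC\setminus J)$, hence the maximal Cauchy integral $C^\om_*$ is bounded on $J$ and $f\mapsto C^{fd\om}$ is bounded on $L^2(\om)$. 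Since $\om\asymp\cH^1|_J$ this transfers to the boundedness of the Cauchy integral on $L^2(\cH^1|_J)$, and Melnikov's symmetrization then yields $c_2(\cH^1|_J)<\infty$ — contradicting the fact, essentially the content of \cite{XT}, that the Menger curvature of length measure on a dimension--$1$ conformal self-similar Cantor set is infinite (equivalently, that length on such a set does not support a bounded Cauchy transform). Therefore $\dim\om<1$.

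I expect the genuine obstacle to be the first step: turning $\dim\om=1$ into $\om\asymp\cH^1|_J$. Dimension equality by itself only yields $\om(B(z,r))=r^{1+o(1)}$ for $\om$--a.e. $z$, which is far too weak to feed into the Cauchy transform argument; one really needs the rigidity coming from the Gibbs structure of harmonic measure on conformal repellers (this is where results of Carleson, Makarov, Volberg and Zdunik enter, and one must be slightly careful about the bounded--distortion statements that identify $\cH^1|_J$ with the equilibrium/conformal measure up to constants). A secondary, purely technical point is pinning down the exact form of the CFMS--type comparison needed to pass from $\om(B(z,r))\asymp r$ to the two--sided bound \eqref{green1} in a \emph{one sided} NTA domain with CDC; this is standard but should be quoted with care, since the two--sided corkscrew condition is not assumed.

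Finally, it is worth noting that this deduction is essentially Carleson's original strategy rephrased through Melnikov's curvature: $\dim\om=1$ would make harmonic measure and length on $J$ mutually comparable, which in turn would propagate regularity of $J$ incompatible with its Cantor structure — here the incompatibility is packaged as the failure of the $L^2$ bound for the Cauchy integral with respect to length.
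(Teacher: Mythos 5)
Your argument is correct and follows essentially the same route as the paper: the paper runs the Gibbs-measure dichotomy (either $\om$ and $\cH^1|_J$ are boundedly equivalent, which forces \eqref{om} and \eqref{green1} and is killed by the Cauchy transform/Menger curvature argument of Section~\ref{warm}, or they are mutually singular, in which case the strict inequality in the variational principle plus Manning's formula gives $\dim\om<1$), while you phrase the same dichotomy as a rigidity statement ("equality in the variational principle iff $\om\asymp\cH^1|_J$") inside a proof by contradiction. The ingredients — thermodynamic formalism for conformal repellers, the variational principle, Manning's dimension formula, and the curvature obstruction — are identical, so this is the paper's proof in slightly different packaging.
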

\begin{proof}
First one can use the theory of Gibbs measures (see \cite{RB}) and notice that on conformal repeller the Hausdorff measure $\cH^1|J$ and $\om$ are Gibbs measures (meaning that their Jacobians are H\"older continuous on $J$). So they are either boundedly mutually absolutely continuous with bounded density or they are mutually singular. In the former case one has \eqref{om}, and, hence, \eqref{green1}, which brings us to contradiction--see above. In the case of singularity,  we consider the invariant ergodic measure $\nu$ equivalent to $\om$, and then variational principle for Gibbs measure $\cH^1|J$ (or rather for its invariant counterpart) implies that
$$
h_\nu-\int_J\log |f'(\zeta)| d\nu(\zeta)<0,
$$
where $f$ is the conformal dynamical system that created $J$ and $h_\nu$ is the entropy of $\nu$. By Manning's formula \cite{AM}  the previous inequality implies
$$
\dim \om=\dim\nu = \frac{h_\nu}{\int_J\log |f'(\zeta)| d\nu(\zeta)}<1\,.
$$
\end{proof}
Carleson's proof \cite{LC} was much more ``hands on". The above proof is in \cite{AV} with more details.

\medskip

The following statement would be an extension of the result of Carleson, but  it is still a conjecture. It would be solved if Conjecture \ref{conj4} were solved.
\begin{conjecture}
\label{conj6}
For any conformal self-similar set $J$ of dimension $\delta<1$ its harmonic measure has strictly smaller dimension:
$$
\dim \om_{\bC\setminus J} <\delta\,.
$$
\end{conjecture}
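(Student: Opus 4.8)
The plan is to run, with exponent $\delta$ in place of $1$, the thermodynamic-formalism dichotomy already used in the proof of Corollary~\ref{Carl}, and then to close the borderline case by a rigidity argument. Let $f=(f_1,\dots,f_d)$ be the expanding conformal dynamical system generating $J$, and for an $f$-invariant ergodic probability measure $\mu$ write $\chi_\mu=\int_J\log|f'|\,\dif\mu>0$ for its Lyapunov exponent and $h_\mu$ for its entropy. By the standard theory of Gibbs measures on conformal repellers (\cite{RB}, \cite{AV}), $\cH^\delta|J$ is boundedly equivalent to the $\delta$-conformal measure $\mu_\delta$, the unique equilibrium state of the Hölder potential $-\delta\log|f'|$; Bowen's formula identifies $\delta$ as the unique zero of the topological pressure $t\mapsto P(-t\log|f'|)$, and the variational principle for dimension gives $\delta=\sup_\mu h_\mu/\chi_\mu$ with the supremum attained \emph{only} at $\mu_\delta$. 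On the other hand $\om=\om_{\bC\setminus J}$ is itself a Gibbs measure for a Hölder potential --- this is where the one-sided NTA axioms 1)--3) and the boundary Harnack principle \eqref{BHP} enter, making the relevant Jacobians Hölder --- so it admits an equivalent $f$-invariant ergodic measure $\nu$, and by the Manning--Przytycki formula (\cite{AM}, \cite{AV}) one has $\dim\om=\dim\nu=h_\nu/\chi_\nu$.

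Comparing the two formulas gives $\dim\om\le\delta$ \emph{unconditionally}, with equality if and only if $\nu$ realizes the supremum, i.e.\ $\nu=\mu_\delta$. Since two Gibbs measures for Hölder potentials are either boundedly mutually absolutely continuous or mutually singular --- the dichotomy already invoked in Corollary~\ref{Carl} --- the identity $\nu=\mu_\delta$ forces $\om$ and $\cH^\delta|J$ to be boundedly mutually absolutely continuous, hence $\om(B(z,r))\asymp r^\delta$ for all $z\in J$ and all small $r$. Thus everything reduces to a single statement: \emph{when $\delta<1$, harmonic measure of a conformal self-similar set is not Ahlfors $\delta$-regular}, equivalently $\om$ is never the measure of maximal dimension on such a Cantor repeller.

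To settle this I would argue as in Section~\ref{warm}: combined with the CDC, the corkscrew condition and the Harnack chain condition, the estimate $\om(B(z,r))\asymp r^\delta$ upgrades to the two-sided Green bound \eqref{green} with exponent $\delta$. For $\delta=1$ this already contradicts Corollary~\ref{Carl} (via Melnikov's symmetrization, or via the David--Semmes theorem \cite{NTV} in $\bR^n$). For $\delta<1$ one is exactly in the situation of Conjecture~\ref{conj5}, and since $J$ is self-similar the bound \eqref{green} forces $J$ to be a regular Cantor set in the sense of \eqref{regular} (see \cite{AV}), reducing to Conjecture~\ref{conj3}. A fully unconditional alternative is to quote the rigidity theory for harmonic measure on conformal repellers (Carleson \cite{LC}, Makarov, Volberg, Zdunik; see \cite{AV} and the references therein): $\om\sim\cH^{\dim J}|J$ on a conformal repeller $J$ forces $J$ to be a real-analytic Jordan curve, which is impossible here because $\dim J=\delta<1$ makes $J$ totally disconnected.

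The main obstacle is precisely this equality case: proving \emph{without appeal to deep rigidity theorems} that harmonic measure of a genuine Cantor conformal repeller is never the $\delta$-conformal measure. Below dimension $1$ the Menger-curvature symmetrization that drives the $\delta=1$ argument is unavailable, and its Riesz-transform substitute behaves differently in fractional co-dimension; so a self-contained proof would need either a new curvature or rectifiability obstruction for sets of dimension $<1$ carrying an Ahlfors-regular harmonic measure (this is the self-similar case of Conjecture~\ref{conj5}), or the full Makarov--Volberg--Zdunik circle. I expect the shortest complete proof to go through the latter, the only genuinely new ingredient being the folklore reduction, recalled in Section~\ref{warm}, from $\om(B(z,r))\asymp r^\delta$ to the Green bound \eqref{green}.
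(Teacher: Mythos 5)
This statement is presented in the paper as an open conjecture --- the text explicitly says ``it is still a conjecture'' and that it ``would be solved if Conjecture \ref{conj4} were solved'' --- so there is no proof in the paper to compare against, and your proposal should be judged on whether it actually closes the problem. It does not. Your reduction is correct and is exactly the dichotomy driving Corollary \ref{Carl}: since $\om$ is a Gibbs measure for a H\"older potential (by BHP and the conformal dynamics, as in \cite{AV}), either $\nu$ (the invariant measure equivalent to $\om$) differs from the equilibrium state $\mu_\delta$ of $-\delta\log|f'|$, in which case the variational principle gives $h_\nu/\chi_\nu<\delta$ strictly and Manning's formula finishes; or $\nu=\mu_\delta$, forcing $\om(B(z,r))\asymp r^\delta$ and hence the Green bound \eqref{green} with exponent $\delta$. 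You have correctly isolated the equality case as the entire content of the conjecture. That much is a genuine and correct contribution of structure, and it matches how the author treats the $\delta=1$ case.

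The gap is that neither of your two proposed ways of killing the equality case is available. Route (a) reduces to Conjectures \ref{conj5} and \ref{conj3}, which are themselves open in this paper --- indeed the author proves the relevant non-existence statement only for $\delta=1$, where Melnikov's symmetrization and Menger curvature (or the David--Semmes theorem in higher dimensions) apply; you acknowledge yourself that no substitute is known below dimension $1$. Route (b) quotes a ``rigidity theorem'' asserting that $\om\sim\cH^{\dim J}|J$ on a conformal repeller forces $J$ to be a real-analytic curve. No such theorem is established in the generality you need: the Makarov--Volberg--Zdunik rigidity results cover polynomial Julia sets and certain dimension-one or linear self-similar situations, and the fact that the author (who wrote \cite{AV}) states the $\delta<1$ case as open is strong evidence that the equality case for a general conformal Cantor repeller of dimension $\delta<1$ is precisely the missing theorem, not a quotable one. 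So what you have is a correct and useful reduction of Conjecture \ref{conj6} to the statement ``harmonic measure of a Cantor conformal repeller of dimension $\delta<1$ is never Ahlfors $\delta$-regular,'' but not a proof.
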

We intentionally restricted to the case $\delta<1$. In truth, if $\delta=1$ this is proved by Carleson, see above, and if $\delta>1$, one just uses a celebrated result of Jones--Wolff  \cite{JW} that for any planar domain harmonic measure $\dim\om\le 1$.  So, we would get $\dim\om <\dim J$ for all possible conformal self-similar sets $J$. In higher dimensions $n$ one can still compare $\dim\om $ and $\dim\pd\Om$  and
\cite{JA} proves $\dim \om <\dim \pd \Om$ when $\pd\Om$ is $s$-Ahlfors--David regular, $n-1\le s\le n$, and $\pd\Om$ is ``uniformly non-flat''.
See the definition of uniformly non-flat in ]cite{JA}, fractal boundaries are uniformly non-flat.

\section{The proof of Conjecture \ref{conj4} in a special case}
\label{R}

\begin{theorem}
\label{Rth}
Let $n=2$. Let $\Om$ is a one sided  NTA domain as above and let $\pd\Om$ be a regular Cantor set. Let $n=2$. Let  \eqref{nicera} hold. Let, in addition, $\Om$ be such that its Green's function (with some pole) satisfies
\begin{equation}
\label{green2}
c_1\dist(z, \pd\Om)^\delta \le G(z, p) \le c_2\dist(z, \pd\Om)^\delta
\end{equation} 
for all $z\in \Om$ sufficiently close to $\pd\Om$ and for some $\delta>0$. In addition we require 
\begin{equation}
\label{Ra}
\pd\Om\subset \bR\,.
\end{equation}
Then $u=\la v$ for some constant $\la$.
\end{theorem}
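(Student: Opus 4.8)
The plan is to exploit the hypothesis $\partial\Om\subset\bR$ through reflection across the real line, reduce to a statement about the boundary data of positive harmonic functions on the slit domain $B\setminus J$ (write $B=B(z,r)$, $J=\partial\Om\cap B\subset\bR$), and then show that the Cantor structure of $J$ cannot coexist with a non-constant real-analytic ratio. Since $J\subset\bR$, near $E=B\cap J$ the domain $\Om\cap B$ equals $B\setminus J$, which is invariant under $z\mapsto\bar z$. Decompose $u=u_s+u_a$, $v=v_s+v_a$ into the parts even and odd under this conjugation. The odd parts vanish on the \emph{whole} diameter $B\cap\bR$, so by Schwarz reflection they extend to functions harmonic on all of $B$; the even parts are positive, harmonic on $B\setminus J$, vanish on $J$, are harmonic across each complementary interval (``gap'') of $J$, and there have vanishing normal derivative, so $u_s|_{B\cap\mathbb{H}}$, $v_s|_{B\cap\mathbb{H}}$ solve a mixed Dirichlet--Neumann problem on the half-disc (Dirichlet $0$ on $J$, Neumann $0$ on the gaps). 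As $u_a=v_a=0$ on $J$, \eqref{nicera} gives $u_s/v_s\to R$ on $J$, and it suffices to prove $u_s=\la v_s$ and $u_a=\la v_a$.

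Comparing $u,v$ (and $u_s,v_s$) with $G(\cdot,p)$ via \eqref{BHP} and using \eqref{green2} gives $u_s(w)\asymp v_s(w)\asymp\dist(w,J)^\delta$ for $w\in B\setminus J$ near $J$; the folklore estimates of Section \ref{warm} (CDC, corkscrew, \eqref{green2}) give $\om(B(x_0,s))\asymp s^\delta$ for every $x_0\in J$, so $J$ is $\delta$-Ahlfors--David regular — in particular $\delta\le1$, and $J$, being a Cantor set, contains no arc and is nowhere real-analytic. Representing $u_s|_{B\cap\mathbb{H}}$ by the Poisson kernel of the half-disc, its behaviour near $x_0\in J$ is governed by its trace $g_u:=u_s|_{\text{gaps}}$ (which obeys $g_u(t)\asymp\dist(t,J)^\delta$), the half-circle contribution being $O(\dist(w,J))$, negligible against $\dist(w,J)^\delta$ in the main case $\delta<1$ (the endpoint $\delta=1$ needing a more delicate treatment of that contribution). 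Putting $w=x_0+i\eps$, $\eps\to0^+$, the relation $u_s/v_s\to R(x_0)$ becomes
\[
\frac1\pi\int_{\text{gaps}}\frac{\eps\,\big(g_u-R(x_0)g_v\big)(t)}{(x_0-t)^2+\eps^2}\,dt=o(\eps^\delta)\qquad\text{for every }x_0\in J,
\]
i.e.\ the defect $g_u-R(x_0)g_v$ on the gaps has Poisson extension vanishing to order $o(\dist^\delta)$ at $x_0$.

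I would then argue that this, together with the $\delta$-regularity of $J$ and the real-analyticity of $R$, forces $R\equiv\la$ constant and $g_u=\la g_v$. If $R$ were non-constant, choose $x_0,x_1\in J$ with $R(x_0)\ne R(x_1)$; subtracting the two identities yields an identity for $g_v$ alone — the genuine capacitary-type boundary datum of a $\delta$-regular Cantor set — requiring the Poisson mass it concentrates around $x_0$ (which, after rescaling, encodes the gap structure at every scale at $x_0$) to cancel against the one around $x_1$, which should be impossible by a density/capacity estimate. This is exactly where the Cantor (rather than interval) nature of $\partial\Om$ is decisive: for $J$ an interval the analogous datum is comparable to arclength and the identity \emph{does} have non-constant solutions, matching the genuine failure of the conclusion there (the $\delta=1$, piecewise-real-analytic alternative of Conjecture~\ref{conj4}). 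Once $R\equiv\la$ and $g_u=\la g_v$, applying \eqref{BHP} to $u_s-\la v_s$ against the positive harmonic function $u_s+\la v_s$ (which vanishes on $J$) forces $u_s\equiv\la v_s$; running the same argument on the odd parts $u_a,v_a$ (harmonic on all of $B$, vanishing on $B\cap\bR$, and $O(\dist^\delta)$ near $J$) gives $u_a\equiv\la v_a$, hence $u=\la v$.

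The main obstacle is the crux of the previous paragraph: converting ``$g_u-R(x_0)g_v$ has Poisson extension $o(\dist^\delta)$ at every $x_0\in J$'' into ``$R$ is constant''. The difficulty is that after rescaling around a point of $J$ the profile of $g_v$ does not converge (for self-similar $J$ it is periodic in $\log(1/\eps)$), so the cancellation must be quantitative, using the uniform $\delta$-regularity of the Cantor set together with the Hölder modulus in \eqref{BHP}; making this robust — in particular independent of any self-similarity, as \eqref{regular} only guarantees regularity — is where the real work lies.
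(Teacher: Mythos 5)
Your reduction to the symmetric parts and your use of the reflection $z\mapsto\bar z$ match the opening moves of the paper's proof, and your boundary estimates $u\asymp v\asymp d^\delta$ from \eqref{BHP} and \eqref{green2} are exactly the paper's starting point \eqref{better}. But the proposal has a genuine gap, and you name it yourself: the entire argument funnels into the claim that ``$g_u-R(x_0)g_v$ has Poisson extension $o(\dist^\delta)$ at every $x_0\in J$'' forces $R$ constant, and you offer only a heuristic (a capacitary cancellation between two points $x_0,x_1$ with $R(x_0)\ne R(x_1)$) with no mechanism to carry it out. As you observe, rescaling limits of the boundary datum need not exist, and nothing in \eqref{regular} gives you the quantitative non-cancellation you would need. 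This is not a technical loose end; it is the theorem.

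The paper closes this gap by an entirely different route, and it is worth seeing why the extra machinery is needed. First, instead of working at the fixed vanishing order $d^\delta$, the paper \emph{upgrades} the order of vanishing of $\Phi=e^{\al}v-u$: an interior gradient estimate (Lemma \ref{2}) plus the Cauchy--Pompeiu representation \eqref{int1}--\eqref{int2}, combined with the integrability $d^{-(1-\delta)}\in L^{2+\delta}$ supplied by the regularity of the Cantor set (Lemma \ref{L}) and Sobolev embedding, yield $\Phi\in C^{1+\frac{\delta}{2+\delta}}(U)$, hence $|\Phi|\lesssim d^{1+\frac{\delta}{2+\delta}}$ --- strictly better than first-order vanishing. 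Second, the holomorphic function $H=A\,\pd v/\pd z-\pd u/\pd z$ on $U\setminus J$ is shown to be bounded via Hruschev's Phragm\'en--Lindel\"of lemma (using $\cH^{1-\sigma}(J)=0$, again from regularity of $J$), so $J$ is a removable singularity and $H\equiv0$. Third --- and this is the step your blow-up heuristic is trying to replace --- the conclusion $R\equiv\const$ comes from an elementary Rolle argument: $\Phi$ vanishes at both endpoints of every complementary interval $\ell$ of $J$, so $\frac{d}{dx}(e^{\al}v-u)(c_\ell)=0$ for some $c_\ell\in\ell$; combined with $H\equiv0$ this gives $(e^{\al})'(c_\ell)=0$ at infinitely many points, and real analyticity of $\al$ finishes. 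Your Poisson-kernel density approach, by contrast, would have to extract the constancy of $R$ from metric properties of $J$ alone, which is precisely the part you have not supplied and which the paper avoids by exploiting analyticity of $R$ together with the accumulation of gap endpoints.
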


The assumption \eqref{Ra} is decisively restrictive and I do not know how to get rid of it.
\begin{proof}
For brevity again denote $J=\pd\Om$. We may think that $J\subset [-1,1]=:I$.
First use of \eqref{Ra} is that $R|I =A|I>0$ and $A|I$ is the trace of analytic function in the neighborhood of $I$.
Of course $A=u/v>0$ on $J$, which of course means $A$ is symmetric. 
We can write
$$
A= e^{\al+i\tilde\al}\,.
$$
The second use of \eqref{Ra} (but not the last one) we can think that $u$ and $v$ are symmetric:
$$
u(z) =\overline{u(\bar z)};\,\, v(z) =\overline{v(\bar z)}\,.
$$

\begin{lemma}
\label{L}
Let $d(z):= \dist (z, J)$.  Let $U$ be a neighborhood of $J$. Regularity of $J$ gives that there exists $\delta>0$ such that
 $$
 \frac1{d^{1-\delta}} \in  L^{2+\delta}(U, dm_2)\,.
 $$
 \end{lemma}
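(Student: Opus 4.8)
The plan is to reduce the $L^{2+\delta}$ integrability of $d^{-(1-\delta)}$ to a counting estimate on the dyadic-type covering of a neighborhood of $J$ supplied by the regularity condition \eqref{regular}. First I would fix a neighborhood $U$ of $J$ (say $U=\{z:\dist(z,J)<\rho_0\}$ for some small $\rho_0$) and decompose $U$ into the annular shells
\begin{equation*}
S_k:=\Set{z\in U \colon a^{-(k+1)}\le d(z) < a^{-k}},\qquad k\ge k_0,
\end{equation*}
so that on $S_k$ we have $d(z)\asymp a^{-k}$ and hence $d(z)^{-(1-\delta)(2+\delta)}\asymp a^{k(1-\delta)(2+\delta)}$. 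Thus
\begin{equation*}
\int_U \frac{dm_2(z)}{d(z)^{(1-\delta)(2+\delta)}} \asymp \sum_{k\ge k_0} a^{k(1-\delta)(2+\delta)}\, m_2(S_k),
\end{equation*}
and everything comes down to bounding $m_2(S_k)$.

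The key step is to estimate $m_2(S_k)$ using \eqref{regular}. By that hypothesis, $\{z:d(z)<a^{-k}\}=V_1\cup\dots\cup V_{m_k}$ with $\operatorname{length}(V_i)\le C a^{-k}$ and $m_k\le C a^{\delta' k}$ for some $\delta'>0$; I will take the $\delta$ in the statement of the lemma to be this exponent $\delta'$ (or any smaller positive number, adjusting later). Since $J\subset\bR$ by \eqref{Ra} and each $V_i$ is a piece of $\{d<a^{-k}\}$ of length $\le Ca^{-k}$, each $V_i$ is contained in a rectangle of dimensions $\asymp a^{-k}\times a^{-k}$, so $m_2(V_i)\le C a^{-2k}$. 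The shell $S_k$ is contained in $\{z:d(z)<a^{-k}\}$, hence
\begin{equation*}
m_2(S_k)\le \sum_{i=1}^{m_k} m_2(V_i) \le C\, m_k\, a^{-2k} \le C\, a^{(\delta-2)k}.
\end{equation*}
Plugging this in, the sum becomes $\sum_k a^{k(1-\delta)(2+\delta)}a^{(\delta-2)k} = \sum_k a^{k[(1-\delta)(2+\delta)+\delta-2]}$, and one checks that the exponent $(1-\delta)(2+\delta)+\delta-2 = -\delta^2 -2\delta +\delta = -\delta^2-\delta<0$ is negative for $\delta>0$; wait—more carefully, $(1-\delta)(2+\delta)=2+\delta-2\delta-\delta^2=2-\delta-\delta^2$, so the bracket is $2-\delta-\delta^2+\delta-2=-\delta^2<0$. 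Hence the geometric series converges and the lemma follows.

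I expect the only genuine subtlety—hence the main obstacle—to be matching up the two roles of $\delta$: the $\delta$ appearing as the dimension-type exponent in \eqref{regular} (and in \eqref{green2}) need not literally be the exponent one wants in $d^{-(1-\delta)}\in L^{2+\delta}$. The clean fix is to observe that the computation above has slack: one may replace the exponent $2+\delta$ in $L^{2+\delta}$ by $2+\eta$ and the exponent $1-\delta$ by $1-\eta$ for a possibly smaller $\eta>0$, and the resulting bracket is still strictly negative, so there is room to choose a single small positive parameter that works for both the integrability exponent and the exponent furnished by regularity. The remaining points—that $J\subset\bR$ forces each $V_i$ to sit in an $a^{-k}\times a^{-k}$ box, and that $k_0$ can be chosen so $U=\{d<a^{-k_0}\}$—are routine.
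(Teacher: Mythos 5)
Your proposal is correct and is essentially the paper's own argument: the author likewise sums over the levels $k$, bounds the measure of the level set $\{d<a^{-k}\}$ by $m_k\cdot Ca^{-2k}$ using \eqref{regular}, and arrives at the same convergent geometric series with exponent $-\delta^2$. Your remark about reconciling the two roles of $\delta$ (and the slack available to shrink the exponent) is a reasonable clarification of a point the paper glosses over, but it does not change the method.
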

 \begin{proof}
 Let $D_k=\{ z: d(z) \le a^{-k}\}$. Then $D_k= V_1\cup\dots\cup V_{m_k}, \quad  m_k \le  a^{k\delta}$.
 \begin{eqnarray*}
 \label{1}
& \int \Big(\frac1{d^{1-\delta}}\Big)^{2+\delta}\, dm_2 =\sum_{k=0}^\infty \sum_{i=1}^{m_k} m_2(V_i)\Big(\frac1{a^{-k(1-\delta)}}\Big)^{2+\delta} \le
 \\
 & \sum_k m_k \cdot a^{2k}Big(\frac1{a^{-k(1-\delta)}}\Big)^{2+\delta} \le \sum_k a^{k\delta}\cdot 
 a^{-2k}\cdot a^{k(2+\delta -2\delta -\delta^2} =
 \\
 &\sum_k a^{-k\delta^2}<\infty.
 \end{eqnarray*}
 
 \end{proof}
 
 \bigskip
 
 We wish to prove that $\pd\Phi\in C^{1+\frac{\delta}{2+\delta}} $. This will require a bit of work.
 We know $A=e^{\al+i\tilde \al}$ and $ \frac{u}{v}= A=|A| =e^\al$ on $J$. Then using \eqref{BHP} and \eqref{green2} we obtain
 \begin{equation}
 \label{better}
 |e^\al v-u|=v\Big| e^\al -\frac{u}{v}\Big| \lesssim d^{\delta+\eps}\,.
 \end{equation}
 
 \begin{lemma}
 \label{2}
 Let $U$ be a neighborhood of $J$ and $\al$ be a harmonic function on $U$. Let $u, v$ be two harmonic functions in $U\setminus J$ and $|u|+|v|\lesssim d^\delta$, and $e^\al v-u|\lesssim d^{\gamma}$ with $\gamma-1 \le \delta$. Then
 $$
 |\nabla (e^{\al} v -u)| \lesssim d^{\gamma-1}\,.
 $$
 \end{lemma}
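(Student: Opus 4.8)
The plan is to estimate the gradient of the harmonic function $w := e^\al v - u$ using interior derivative estimates for harmonic functions, combined with the sup bound $|w| \lesssim d^\gamma$ hypothesized near $J$. The point $w$ is harmonic on $U \setminus J$ (note $e^\al$ is \emph{not} harmonic, so this is not immediate: one must instead write $\nabla w = e^\al \nabla v + v e^\al \nabla \al - \nabla u$ and control each piece). Concretely, fix $z \in U \setminus J$ and set $\rho := d(z)/2$; the ball $B(z,\rho)$ lies in $U \setminus J$, and on it every point has distance to $J$ comparable to $d(z)$.

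First I would handle the two genuinely harmonic summands $u$ and $v$: by the standard interior gradient estimate for harmonic functions, $|\nabla u(z)| \lesssim \rho^{-1}\sup_{B(z,\rho)}|u| \lesssim d(z)^{-1} \cdot d(z)^\delta = d(z)^{\delta-1}$, and likewise $|\nabla v(z)| \lesssim d(z)^{\delta-1}$. Since $\al$ is harmonic (hence smooth) on all of $U$, $e^\al$ and $\nabla\al$ are bounded on a neighborhood of $J$, so the contributions $e^\al \nabla v$ and $v e^\al \nabla\al$ are both $\lesssim d(z)^{\delta-1}$. Because $\gamma - 1 \le \delta$ forces $\gamma \le \delta + 1$, and near $J$ we have $d \le 1$ so $d^{\delta-1} \le d^{\gamma-2}$ — wait, this goes the wrong way; the honest bound from these three terms is only $d^{\delta-1}$, which is \emph{weaker} than the claimed $d^{\gamma-1}$ when $\gamma > \delta$. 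So the naive term-by-term estimate does not suffice, and the cancellation encoded in $w = e^\al v - u$ being small must be exploited.

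The correct route: apply the interior gradient estimate directly to the \emph{function} $w$ on $B(z,\rho)$, but first subtract off the obstruction to harmonicity. Write $\Delta w = \Delta(e^\al v) = e^\al v\,(|\nabla\al|^2 + \Delta\al) + 2e^\al \nabla\al\cdot\nabla v = e^\al v |\nabla\al|^2 + 2 e^\al\nabla\al\cdot\nabla v$ (using $\Delta\al = 0$). On $B(z,\rho)$ this Laplacian is bounded by $C(|v| + |\nabla v|) \lesssim d(z)^\delta + d(z)^{\delta-1} \lesssim d(z)^{\delta-1}$. Now represent $w$ on $B(z,\rho)$ via its Poisson integral plus a Newtonian potential of $\Delta w$: $w = h + N$ where $h$ is harmonic with $\sup_{B(z,\rho)}|h| \le \sup_{B(z,\rho)}|w| \lesssim d(z)^\gamma$, so $|\nabla h(z)| \lesssim \rho^{-1} d(z)^\gamma \lesssim d(z)^{\gamma-1}$; and $N$ solves $\Delta N = \Delta w$ on $B(z,\rho)$ with, say, zero boundary data, so by elliptic estimates $|\nabla N(z)| \lesssim \rho \cdot \sup_{B(z,\rho)}|\Delta w| \lesssim d(z)\cdot d(z)^{\delta-1} = d(z)^\delta \le d(z)^{\gamma-1}$, the last step by $\gamma - 1 \le \delta$. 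Adding, $|\nabla w(z)| \lesssim d(z)^{\gamma-1}$, as desired.

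The main obstacle is exactly the issue flagged above: because $e^\al v$ is not harmonic, one cannot apply the harmonic interior estimate to $w$ directly, and the crude split into $\nabla(e^\al v)$ and $\nabla u$ loses the cancellation and only yields $d^{\delta-1}$. The fix is to treat $w$ as a solution of a Poisson equation $\Delta w = g$ with $\|g\|_\infty \lesssim d^{\delta-1}$ on the relevant ball, decompose $w$ into a harmonic part (where the smallness $|w|\lesssim d^\gamma$ is used) plus a potential part (where $\gamma - 1 \le \delta$ is used to absorb the $d^\delta$ error into $d^{\gamma-1}$), and invoke interior Schauder/gradient estimates for each. One should also check that $B(z, d(z)/2)$ stays inside the neighborhood where all hypotheses hold, shrinking $U$ once at the outset if necessary.
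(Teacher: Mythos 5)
Your proof is correct, but it takes a different (though closely related) route from the paper's. The paper freezes the coefficient: fixing $z_0$ and $\al_0=\al(z_0)$, it observes that $e^{\al_0}v-u$ \emph{is} harmonic on $D(z_0,\tfrac12 d_0)$ and satisfies $|e^{\al_0}v-u|\le|e^{\al_0}-e^{\al}|\,|v|+|e^{\al}v-u|\lesssim d_0^{1+\delta}+d_0^{\gamma}\lesssim d_0^{\gamma}$, so the plain interior gradient estimate gives $|\nabla(e^{\al_0}v-u)(z_0)|\lesssim d_0^{\gamma-1}$; the leftover $\nabla[(e^{\al}-e^{\al_0})v]$ is bounded by $d^{\delta}\le d^{\gamma-1}$. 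You instead keep the variable coefficient, compute $\Delta(e^{\al}v-u)=e^{\al}v|\nabla\al|^2+2e^{\al}\nabla\al\cdot\nabla v$, and split off the Newtonian potential of this right-hand side on $B(z,d(z)/2)$, estimating the harmonic part by $\rho^{-1}\sup|w|\lesssim d^{\gamma-1}$ and the potential part by $\rho\sup|\Delta w|\lesssim d\cdot d^{\delta-1}=d^{\delta}\le d^{\gamma-1}$. Both arguments exploit the same cancellation, use the interior gradient estimate for the harmonic piece in the same way, and invoke the hypothesis $\gamma-1\le\delta$ at the same point (to absorb a $d^{\delta}$ error). The paper's version is slightly more elementary (no Green potential or elliptic estimate needed, only the gradient bound for harmonic functions plus the Lipschitz bound $|e^{\al}-e^{\al_0}|\lesssim d_0$ on the disc), while yours is more systematic and would generalize to settings where the perturbation of harmonicity is only controlled in $L^\infty$ rather than arising from an explicit smooth factor. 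Your correctly flagged caveat about shrinking $U$ so that $B(z,d(z)/2)\subset U$ is harmless and matches the paper's implicit localization.
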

 \begin{proof}
 Fix $z_0\in U\setminus J$, let $d_0=d(z_0)$. Consider disc $D:= D(z_0, \frac12 d_0)$. Let $\al_0=\al(z_0)$.
 On $D$ we have $|e^{\al_0} u-v|\lesssim d_0^{\delta+1} +d_0^\gamma \le 2 d_0^\gamma$.  
 As $e^{\al_0} u-v|$ is harmonic on $D$ we get its gradient estimated:
 $$
 |\nabla (e^{\al_0} u-v|)(z_0)|\le C d_0^{\gamma-1}.
 $$
 On the other hand $|\nabla \big[(e^\al -e^{\al_0}) v\big]| \lesssim d^{-1+\delta}\cdot d + d^\delta \le 2 d^\delta$.
 \end{proof}
 
 Put $\Phi:= e^\al v-u$, $\pd \Phi = e^\al\pd v -\pd u  + v\pd e^\al$, so, by Lemma \ref{2} with $\gamma=\delta+\eps$ we have
 \begin{equation}
 \label{pdPhi}
 |\pd\Phi | \le \frac1{d^{1-\delta-\eps}}\,.
 \end{equation}
 
 \medskip
 
 Now we write 
 \begin{eqnarray}
 \label{int1}
 & \pd \Phi =\int_{\pd U}\frac{\pd \Phi(\zeta)}{z-\zeta} d\zeta + \int\int_{U\setminus D_k} \frac{\bar\pd \pd \Phi(\zeta)}{z-\zeta} dm_2(\zeta)+\notag
 \\
 & \int_{\pd D_k}\frac{\pd \Phi(\zeta)}{z-\zeta} d\zeta =: I +II +III\,.
 \end{eqnarray}
 By \eqref{pdPhi} $III| \le Cm_k\cdot a^{-k}\cdot a^{-k(-1+delta+\eps)}= a^{-k\eps}\to 0$, $k\to \infty$.
 
 On the other hand, $|\bar\pd \pd \Phi|= |\bar\pd \pd e^{al} \cdot v  +\pd e^\al\cdot \bar\pd v +
 \bar\pd e^\al\cdot \pd v | \le \frac1{d^{1-\delta}}$. We use Lemma \ref{L} and Sobolev inequality to get 
 \begin{equation}
 \label{La}
 \pd\Phi\in \Lambda^{\frac{\delta}{2+\delta}},
 \end{equation}
 where $\Lambda^s$ stands for the H\"older class of order $s$. This is because $\frac1\zeta \,*\, L^p \subset \Lambda^{1-\frac2p}$.
 But $\Phi$ is real-valued, so \eqref{La} implies that $\nabla \Phi |(U\setminus J) \in \Lambda^{\frac{\delta}{2+\delta}}(U) |(U\setminus J)$.
Repeat the integral formula consideration \eqref{int1}, but now for $\Phi$ itself.
 As $|\Phi|\lesssim d^\delta$ we will get
 \begin{equation}
 \label{int2}
 \forall z\in U\setminus J,\,\, \,\Phi(z) = \int_{\pd U}\frac{ \Phi(\zeta)}{z-\zeta} d\zeta + \int\int_{U\setminus J} \frac{\bar\pd  \Phi(\zeta)}{z-\zeta} dm_2(\zeta)\,.
 \end{equation}
 Notice that the right hand side belongs to $C^{1+\frac{\delta}{2+\delta}} (U)$ by \eqref{La} and the fact that $\frac1{\zeta^2}\, *\, \Lambda^s \subset \Lambda^s$ if $s\in (0, 1)$.  Also $\Phi$ is continuous up to $J$.  Hence formula \eqref{int2} holds for all $z\in U$.
 
 Consequently $\Phi | U\in C^{1+\frac{\delta}{2+\delta}} (U)$. Therefore, using that $\Phi=0$ on $J$ we get
 \begin{equation}
 \label{1plus}
 |e^\al v -u|=|\Phi|\le C\,d^{1+\frac{\delta}{2+\delta}}\,.
 \end{equation}
 Notice that this inequality is an improvement upon inequality \eqref{better}. Use Lemma \ref{2} again, this time with $\gamma= 1+\frac{\delta}{2+\delta}$. The we get
\begin{equation}
 \label{nabla}
|\nabla \Phi|= |\nabla (e^\al  v -u)| \lesssim d^{\frac\delta{2+\delta}}\Rightarrow   | e^{\al(x)} \nabla  v(x) -
\nabla u(x)| \lesssim d^{\frac\delta{2+\delta}},\quad\forall x\in [-1, 1].
 \end{equation}
 By symmetry $u_y= v_y=0$ on $ [-1, 1]$. Hence
 \begin{equation}
 \label{dx}
 | e^{\al(x)}   \frac{\pd v}{\pd x} -
 \frac{\pd u}{\pd x}| \lesssim d^{\frac\delta{2+\delta}},\quad\forall x\in [-1, 1].
 \end{equation}
 
 Introduce a new function holomorphic in $U\setminus J$:
 $$
 H(z):= A(z) \frac{\pd v}{\pd z}  - \frac{\pd u}{\pd z}
 $$
 We want to prove that it is holomorphic in $U$. Its restriction to $[-1,1]$ satisfies \eqref{dx}. 
 So if we prove that it is  indeed holomorphic in $U$ we will prove that it is identically zero.
 
 We know that a priori
 \begin{equation}
 \label{growth}
 |H(x+iy)| \le \frac{C}{d(x+iy)^{1-\delta}} \le  \frac{C}{|y|^{1-\delta}}\,.
 \end{equation}
 
 \subsection{A punch line}
 \label{punch}
 Let $\bD$ denote the unit disc.
 \begin{lemma}[Hruschev, 1976]
 \label{Hruschev}
 Let $f$ be holomorphic in $\bD_+= \bD\cap \bC_+$ and let it satisfy
 \begin{enumerate}
 \item   $|f(z)| \le C_1 e^{\frac{c_2}{|\Im z|^\sigma}},\quad \sigma \in (0, 1)$;
 \item  for a $J\subset \bR$, $f$ has boundary values $f^* (x) =\lim_{z\to x, z\in \bC_+} f(z)$, $x\in \bR\setminus J$;
 \item  $|f^*(x)| \le 1$,  for all $x\in \bR\setminus J$;
 \item  $\cH^{1-\sigma}(J) =0$.
 \end{enumerate}
 Then $|f(z)|\le 1$ on $\bD_+$.
 \end{lemma}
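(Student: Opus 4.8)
The plan is to turn the claim into a one‑sided maximum‑principle estimate for $u:=\log|f|$, carried out not on $\bD_+$ but on a ``sawtooth'' subdomain obtained by deleting a Carleson‑type neighbourhood of $J$; condition (4) will make the error coming from the deleted part tend to $0$.

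I would begin by recording what is known about the subharmonic function $u=\log|f|$ on $\bD_+$: by (1), $u(z)\le\log C_1+c_2|\Im z|^{-\sigma}$ everywhere on $\bD_+$; by (2)--(3), $\limsup_{z\to\zeta}u(z)\le0$ at each $\zeta$ in the real part $(\bR\setminus J)\cap\partial\bD_+$ of the boundary; and — implicit in the statement but indispensable, since otherwise $f(z)=e^{-iz}$ is a counterexample — $u\le0$ on the circular part $\Gamma_0:=\partial\bD\cap\overline{\bC_+}$. After a dilation I may assume $J\subset(-\tfrac12,\tfrac12)$, and I fix an arbitrary $z_0\in\bD_+$; the goal is $u(z_0)\le0$. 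Given $\epsilon>0$, use (4) to pick a finite cover $J\subset\bigcup_k I_k$ by open intervals of lengths $\ell_k\le\delta$ with $\sum_k\ell_k^{1-\sigma}<\epsilon$ (so also $\sum_k\ell_k\le\delta^{\sigma}\epsilon$). Over each $I_k$ erect an isosceles triangular ``tent'' $\hat T_k$ with base a fixed dilate $C_0I_k$, base angles $\pi/4$, and height $\asymp\ell_k$, and put $\Omega^*:=\bD_+\setminus\bigcup_k\hat T_k$. For $\delta$ small, $z_0\in\Omega^*$, all tents are far from $z_0$ and from $\Gamma_0$, and $\overline{\Omega^*}\cap\bR\subset\bR\setminus J$.

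The boundary $\partial\Omega^*$ splits into $\Gamma_0$ (where $u\le0$), a relatively open set $E_1\subset\bR\setminus J$ (where $\limsup u\le0$), and the tent surfaces $E_2:=\bigcup_k(\partial\hat T_k\cap\partial\Omega^*)$, on which a boundary point $z$ lies on some $\partial\hat T_k$ with $0<\Im z\lesssim\ell_k$, hence $u(z)\le\log C_1+c_2(\Im z)^{-\sigma}$. By subharmonicity, $u(z_0)\le\int_{\partial\Omega^*}\hat u\,d\omega^{z_0}_{\Omega^*}$, where $\hat u$ is the upper boundary function and $\omega^{z_0}_{\Omega^*}$ is harmonic measure; the integrals over $\Gamma_0$ and $E_1$ are $\le0$, so $u(z_0)\le\int_{E_2}\bigl(\log^+C_1+c_2(\Im z)^{-\sigma}\bigr)\,d\omega^{z_0}_{\Omega^*}$. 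It therefore suffices to bound this last integral by $C\epsilon/\Im z_0$ with $C=C(C_1,c_2,\sigma)$: then $u(z_0)\le C\epsilon/\Im z_0$, and letting $\epsilon\downarrow0$ (with $\delta=\delta(\epsilon)\downarrow0$) gives $u(z_0)\le0$, which is the claim since $z_0$ was arbitrary.

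The remaining estimate I would do tent by tent. By the maximum principle, $\omega^{z_0}_{\Omega^*}$ restricted to $\partial\hat T_k$ is dominated by the harmonic measure of $\partial\hat T_k$ in $\bC_+\setminus\hat T_k$ from $z_0$, and splitting the latter at the first visit to $\{|z-c_k|=2\ell_k\}$ ($c_k$ the centre of $I_k$) factors it, up to absolute constants, as a total mass $\lesssim\omega^{z_0}_{\bC_+}(C_0I_k)\lesssim\ell_k/\Im z_0$ times an $\ell_k$‑rescaled copy $\mu_k$ of one fixed ``unit‑tent'' harmonic measure $\mu$. Thus the $k$‑th term is $\lesssim(\ell_k/\Im z_0)\bigl(\log^+C_1+c_2\ell_k^{-\sigma}\int(\Im z)^{-\sigma}\,d\mu\bigr)$, and $\int(\Im z)^{-\sigma}\,d\mu<\infty$ exactly because the base angle was taken to be $\pi/4$: $\bC_+\setminus\hat T_k$ then has a convex corner of opening $3\pi/4$ at each base vertex, so $d\mu\asymp(\Im z)^{1/3}\,d(\text{arclength})$ near the base, which dominates the singularity $(\Im z)^{-\sigma}$ since $\sigma<1<\tfrac43$ (near the apex $\Im z\asymp\ell_k$ is bounded away from $0$). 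Summing, $\int_{E_2}\bigl(\log^+C_1+c_2(\Im z)^{-\sigma}\bigr)\,d\omega^{z_0}_{\Omega^*}\lesssim\frac1{\Im z_0}\bigl((\log^+C_1)\sum_k\ell_k+c_2\sum_k\ell_k^{1-\sigma}\bigr)\lesssim\frac{(\log^+C_1)\delta^{\sigma}\epsilon+c_2\epsilon}{\Im z_0}$, as wanted. The step I expect to be the real work is this harmonic‑measure bound on the tent surfaces — both the comparison of $\omega^{z_0}_{\Omega^*}|_{\partial\hat T_k}$ with a scaled model measure and the uniform finiteness of $\int(\Im z)^{-\sigma}\,d\mu$ — which is precisely where the balance between the growth exponent $\sigma$ and the ``codimension'' $1-\sigma$ of $J$ enters. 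Two routine wrinkles remain: $u$ need not be bounded above up to $E_1$, so the subharmonicity step should be run on $\Omega^*\cap\{\Im z>t\}$ with $t\downarrow0$, the boundary hypothesis killing the $\{\Im z=t\}$ contribution; and it is cleanest to merge overlapping covering intervals before building the tents, which does not increase $\sum_k\ell_k^{1-\sigma}$ by subadditivity of $t\mapsto t^{1-\sigma}$.
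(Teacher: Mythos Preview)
The paper does not prove this lemma; it is quoted as a result of Hruschev \cite{SH} and described only as ``a sophisticated variant of the Phragm\'en--Lindel\"of principle.'' So there is nothing in the paper to compare your argument against, and your task was really to reconstruct a proof from scratch.

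Your approach---deleting tents over a fine cover of $J$ and applying the maximum principle on the resulting sawtooth domain, with the growth bound (1) and the Hausdorff condition (4) balancing to give a contribution $\lesssim\sum_k\ell_k^{1-\sigma}$ from the tent surfaces---is exactly the classical route to such results and is sound. The harmonic--measure estimate on the tent surfaces (total mass $\lesssim\ell_k/\Im z_0$, shape comparable to a fixed model, and $\int(\Im z)^{-\sigma}\,d\mu<\infty$ because the corner exponent $1/3$ beats $\sigma<1$) is correctly identified as the heart of the matter and correctly handled. You are also right that the statement as printed omits a hypothesis on the circular arc $\Gamma_0=\partial\bD\cap\overline{\bC_+}$: your example $f(z)=e^{-iz}$ satisfies (1)--(4) with $J=\emptyset$ but has $|f(i/2)|>1$. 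In the paper's application the function $H$ is holomorphic across $\Gamma_0$, so the omission is harmless there, but it is an omission.

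One point is weaker than you suggest. Your ``routine wrinkle'' of truncating at height $t$ and letting $t\downarrow0$ does \emph{not} work as written: on $\{\Im z=t\}\cap\Omega^*$ the only available bound is $u\le\log C_1+c_2 t^{-\sigma}$, while the harmonic measure of this set from $z_0$ tends to $\omega^{z_0}_{\Omega^*}(E_1)>0$, not to $0$, so the product blows up. What actually rescues the step is reading condition~(2) as \emph{continuous} extension of $f$ to $(\bR\setminus J)\cap\overline{\bD}$---which is the natural reading, and is how the lemma is used in the paper (there $H$ is holomorphic on $U\setminus J$, hence continuous up to $\bR\setminus J$). Under that reading $u$ is bounded above on all of $\overline{\Omega^*}$ (near $E_1$ by continuity, elsewhere by (1) and the $\Gamma_0$ hypothesis), so the ordinary maximum principle for bounded subharmonic functions applies directly and no truncation is needed. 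If one insists on the bare pointwise--limit reading of (2), a genuine barrier argument is required, and that is not routine.
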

 This Lemma of S. Hruschev \cite{SH} is a sophisticated variant of the Phragm\'en--Lindel\"of principle.
 In our case of regular Cantor set $J$ we can choose any $1-\delta<\sigma<1$ to get
 $$
 \cH^{1-\sigma} (J)=0\,.
 $$
 All other assumptions hold with a good margin. Thus, our $H$ is bounded in $U_+=U\cap\bC_+$ 
 (recall that $U$ is a neighborhood of $[-1,1]\supset J$). Similarly $H$ is bounded in $U_-=U\cap\bC_-$. 
 But the boundary values of $H$ from $U_+$ coincide with the boundary values of $H$ from 
 $U_-$ on $[-1,1]\setminus J$ because $H$ is holomorphic in $U\setminus J$. 
 So, those boundary values coincide Lebesgue almost everywhere on $[-1,1]$. 
 We also just established that $H$ is bounded in $U\setminus J\supset U\setminus [-1,1]$.
 Therefore, $H$ removes the singularity $J$, meaning that $H$ is holomorphic in the whole $U$.
 
 Now we can conclude from \eqref{dx} combined with $u_y= v_y=0$ on $ [-1, 1]$ that $H$ vanishes on $J$. Hence,
 \begin{equation}
 \label{Hv}
 H\equiv 0\Rightarrow e^{\al(x)} \frac{\pd v}{\pd x} -\frac{\pd u}{\pd x}\equiv 0,\quad x\in [-1,1]\setminus J\,.
 \end{equation}
 
 \medskip
 
 This is almost the end of the story, but not quite the end.  Consider again 
 $$
 \Phi|([-1,1]\setminus J) = e^{\al(x)} v(x)- u(x).
 $$
 By the vanishing assumption of $u,v$ on $J$ it vanishes at the end points of every complimentary interval $\ell$ of $J$ in $[-1,1]$.
So, for every complimentary interval $\ell$ there exists a point $c_\ell\in \ell$ such that
$$
\frac{d}{dx}(e^\al v-u)(c_\ell)=0\,.
$$
Combined with \eqref{Hv} this gives us
$$
\frac{d}{dx}(e^\al)(c_\ell)=0, \quad \forall \ell\,.
$$
But $\al$ is a real analytic  function and there are infinitely many $c_\ell$ ($J$ is a Cantor set). Therefore,
$$
A=e^\al\equiv \const\,.
$$
Hence,
$$
\frac{u(x)}{v(x)} = \la=\const, \,\, x\in J.
$$
From here it follows easily that $u-\la v$ is harmonic not only in $U\setminus J$ but in $U$ as a whole. So $u-\la v$ is real analytic on $[-1,1]$. Being zero on $J$ this harmonic function  must be zero on real analytic arc containing $J$, thus
$$
u-\la v\equiv 0 \,\,\text{on}\,\, [-1,1]\,.
$$
Inevitably
$$
 \frac{\pd v}{\pd x} -\la\frac{\pd u}{\pd x}\equiv 0,\quad x\in [-1,1].
 $$
 At the same time, by symmetry $ \frac{\pd v}{\pd y} -\la\frac{\pd u}{\pd y}\equiv 0,\quad x\in [-1,1]$. Hence, $ \frac{\pd v}{\pd z} -\la\frac{\pd u}{\pd z}\equiv 0$ on $[-1,1]$, and so, in $U$. Function $u-\la v$ is real valued, so the same is true for $\bar\pd$-derivative, and this means that $u-\la v\equiv\const$ in $U$. But it vanishes on $J$. Hence,
 $$
 u-\la v \equiv 0\,.
 $$
 Theorem \ref{Rth} is completely proved.
\end{proof}

\end{document}